\newcommand{\Ste}{{\text{Ste}}}
\newcommand{\Bi}{{\text{Bi}}}
\newenvironment{keywords}{
  \vspace{2mm}
  \noindent
  \keywordsname: 
  \itshape\small
}
\newenvironment{mathsubclass}{
  \small
  \noindent
  \mathsubclassname: 
}
 \def\keywordsname{\textbf{Keywords}}
  \def\mathsubclassname{\textbf{2010 AMS Subject Classification}}
\newtheorem{teo}{Theorem}[section]
\renewcommand\@biblabel[1]{#1.}
\newcommand\be{\begin{equation}}
\newcommand\ee{\end{equation}}
\begin{document}

\title{Heat balance integral methods applied to the one-phase Stefan problem with a convective boundary condition at the fixed face}

\author{
Julieta Bollati$^{1,2}$,  Jos\'e A. Semitiel$^{2}$, Domingo A. Tarzia$^{1,2}$\\ \\
\small{$^1$ Consejo Nacional de Investigaciones Cient\'ificas y Tecnol\'ogicas (CONICET)}\\
\small{$^2$ Depto. Matem\'atica - FCE, Univ. Austral, Paraguay 1950} \\  
\small{S2000FZF Rosario, Argentina.} \\
\small{Email: JBollati@austral.edu.ar; JSemitiel@austral.edu.ar; DTarzia@austral.edu.ar.}
}

\date{}

\maketitle

\begin{abstract}
In this paper we consider   a one-dimensional one-phase Stefan problem\ \  corresponding to  the solidification process of a semi-infinite material with a convective boundary condition at the fixed face. The exact solution of this problem, available recently in the literature, enable us to test the accuracy of the approximate solutions obtained by applying the classical technique of the heat balance integral method and the refined integral method, assuming a quadratic temperature profile in space. We develop variations of these methods which turn out to be optimal in some cases.  Throughout this paper,  a dimensionless analysis is carried out by using the parameters: Stefan number (Ste) and the generalized Biot number (Bi). In addition it is studied the case when Bi goes to infinity, recovering the approximate  solutions  when a Dirichlet condition is imposed at the fixed face. Some numerical simulations are provided in order to estimate the errors committed  by each approach for the corresponding free boundary and temperature profiles.
\end{abstract}

\begin{keywords}
Stefan problem, convective condition, heat balance integral method, refined heat balance integral method, explicit solutions.\end{keywords}
\vspace{0.03cm}

\begin{mathsubclass}
35C05, 35C06, 35K05, 35R35, 80A22.
\end{mathsubclass}



\section{Introduction}
Stefan problems model  heat transfer processes that involve a change of phase. They constitute a broad field of study since they arise in  a great number of mathematical and industrial significance problems 
 \cite{AlSo1993}, \cite{Ca1984}, \cite{Gu2003}, \cite{Lu1991}. A large bibliography on the subject was given in \cite{Ta2000} and a review on analytical  solutions  is given in \cite{Ta2011}.

 Due to the non-linearity nature of this type of problems, exact solutions are limited to a few cases and it is necessary to solve them either numerically or approximately. The heat balance integral method introduced by \mbox{Goodman} in \cite{Go1958} is a well-known approximate mathematical technique for solving heat transfer problems and particularly the location of the free boundary in heat-conduction problems involving a phase of change. This method consists in transforming the heat equation into an ordinary differential equation over time by assuming a quadratic  temperature profile  in space. In \cite{Ta1990-b}, \cite{Hr2009-a}, \cite{Hr2009-b}, \cite{Mi2012}, \cite{MiMy2010-a}, \cite{MiMy2012} and in \cite{MoWoAl2002} this method is applied using  different accurate temperature profiles such as: exponential, potential, etc. Different alternative pahtways to develop the heat balance integral method were established in \cite{Wo2001}.

In the last few years, a series of papers devoted to integral method applied to a variety of thermal and moving boundary problems have been published \cite{MaMy2016}, \cite{MiOb2014}
, \cite{Hr2017-a}, \cite{Mi2015}. The recent principle problem emerging in application in heat balance integral method has spread over the area of the non-linear heat conduction: \cite{Hr2015-a}, \cite{FaHr2017}, \cite{Hr2016}, and fractional diffusion: \cite{Hr2015-b}, \cite{Hr2017-b}, \cite{Hr2017-c}.

In this paper, we obtain approximate solutions through integral heat balance methods and variants obtained thereof proposed in \cite{Wo2001} for the solidification of a semi-infinite material $x>0$  when a convective boundary condition is imposed at the fixed face $x=0$.

The convective condition states that heat flux at the fixed face is proportional to the difference between the material temperature and the neighbourhood temperature, i.e.:
\begin{equation*}
k\dfrac{\partial T}{\partial x}(0,t)=H(t)\left(T(0,t)+\Theta_{\infty} \right), 
\end{equation*}
where $T$ is the material temperature, $k$ is the thermal conductivity, $H(t)$  characterizes the heat transfer at the fixed face and $-\Theta_\infty<0$ represents the neighbourhood temperature at $x=0$.

In this paper we will consider the solidification process of a semi-infinite material when a convective condition at the fixed face $x=0$ of the form $H(t)=\frac{h}{\sqrt{t}}, h>0$ is imposed. There are very few research studies that examine the heat balance integral method applied to  Stefan problems with a convective boundary  condition.  Only in  \cite{Go1958}, \cite{MiMy2010-b} and \cite{RoKa2009} a convective condition fixing $H(t)=h>0$ is considered. Although approximate solutions are provided, their precision is verified by making comparisons with numerical methods since there is not exact solution for this choice of $H$.

The mathematical formulation of the problem under study with its corresponding exact solution given in \cite{Ta2017} will be presented in Section 2. Section 3  introduces approximate solutions using the heat balance integral method, the refined heat balance integral method and two alternatives methods for them. 
In Section 4 we also study the limiting cases of the obtained approximate solutions when $h\rightarrow \infty$, recovering the approximate solutions when a temperature condition at the fixed face is imposed. Finally, in Section 5 we compare the approximate solutions  with the exact one to the problem presented in Section 1, analysing the committed error in each case. 

\section{Mathematical formulation and exact solution}

We consider a one-dimensional one-phase Stefan problem for the solidification of a semi-infinite material $x>0$, where a convective condition at the fixed face $x=0$ is imposed. This problem can be formulated mathematically in the following way:


\textbf{Problem (P)}. Find the temperature $T=T(x,t)$ at the solid region $0<x<s(t)$  and the location of the free boundary  $x=s(t)$ such that:
\begin{align}
 &\dfrac{\partial{T}}{\partial{t}} = \dfrac{k}{\rho c} \dfrac{\partial ^2T}{\partial x^2}~,\ 0<x<s(t)~, \ t>0~,\qquad \qquad  \qquad \qquad \qquad \qquad  \label{EcCalor}\\
& k\frac{\partial T}{\partial x}(0,t)=\dfrac{h}{\sqrt{t}} (T(0,t)+\Theta_{\infty})~,   t>0~, \label{CondConvect} \\
& T(s(t),t)= 0~,    t>0~, \label{TempFrontera}\\
&k\frac{\partial T}{\partial x}(s(t),t)=\rho \lambda \dot{s}(t)~,  t>0~, \label{CondStefan} \\
&s(0)=0~. \label{FrontInicial}
\end{align}
where the thermal conductivity  $k$, the mass density  $\rho$, the specific heat $c$  and  the latent heat per unit mass  $\lambda$  are given positive constants. The condition (\ref{CondConvect}) represents the convective condition at the fixed face where $-\Theta_{\infty}<0$ is the neighbourhood temperature at $x=0$ and $h>0$ is the coefficient that characterizes the heat transfer at the fixed face.

The analytical solution for the problem (P), using the similarity technique, was obtained in \cite{Ta2017} and for the one-phase case is given by:
\begin{eqnarray}
T(x,t)&=& -A\Theta_{\infty}+B\Theta_{\infty} \text{erf}\left( \dfrac{x}{2\sqrt{\alpha t}}\right)~, \label{TempExacta}\\
s(t)&=& 2 \xi \sqrt{\alpha t}~,\qquad (\alpha=\frac{k}{\rho c}: \text{ diffusion coefficient})\label{FrontExacta}
\end{eqnarray}
where the constants $A$ and $B$ are defined by:
\begin{eqnarray}
A&=&\dfrac{\text{erf}\left(\xi\right)}{\frac{1}{\text{Bi}\sqrt{\pi}}+ \text{erf}(\xi)}~,\label{A}\\
B&=&\dfrac{1}{\frac{1}{\text{Bi}\sqrt{\pi}}+ \text{erf}(\xi)}\label{B}~,
\end{eqnarray}
and the dimensionless coefficient $\xi$ is the unique positive solution of the following equation:
\begin{eqnarray}
z\exp{\left(z^{2}\right)}\left(\text{erf}\left(z\right)+\dfrac{1}{\text{Bi}\sqrt{\pi}}\right)-\dfrac{\text{Ste}}{\sqrt{\pi}}=0~, \qquad z>0~. \label{XiExacto}
\end{eqnarray}

The dimensionless parameters defined by:
\begin{equation} \label{alfaSteBi}
 \text{Ste}=\dfrac{c\Theta_{\infty}}{\lambda}\quad \text{and} \qquad \text{Bi}=\dfrac{h\sqrt{\alpha}}{k}~,
\end{equation}
represent the Stefan number and the generalized Biot number respectively.

\section{Approximate solutions}

As one of the mechanisms for the heat conduction is the diffusion, the excitation at the fixed face $x=0$ (for example, a temperature, a flux or a convective condition) does not spread instantaneously to the material \mbox{$x>0$}. However, the effect of the fixed boundary condition can be perceived  in a bounded interval $\left[0,\delta(t)\right]$ (for every time $t>0$) outside of which the temperature remains equal to the initial temperature. The heat balance integral method presented in \cite{Go1958} 
established the existence of a function $\delta=\delta(t)$ that measures the depth of the thermal layer. In problems with a phase of change, this layer is assumed as the free boundary, i.e $\delta(t)=s(t)$.

From equation (\ref{EcCalor}) and  conditions (\ref{TempFrontera}) and  (\ref{CondStefan}) we obtain the new condition:
\begin{equation} \label{CondStefanAprox}
\left(\dfrac{\partial T}{\partial x}\right)^2(s(t),t)=-\dfrac{\lambda}{c}\dfrac{\partial ^2 T}{\partial x^2}(s(t),t)~.
\end{equation}

From equation (\ref{EcCalor}) and condition (\ref{TempFrontera}) we obtain the integral condition:
\begin{eqnarray}
\dfrac{d}{dt} \int\limits_{0}^{s(t)} T(x,t) dx&=&\dfrac{k}{\rho c}\left[\dfrac{\rho \lambda}{k} \dot{s}(t)-\frac{\partial T}{\partial x}(0,t) \right]~.\label{EcCalorAprox}
\end{eqnarray}

The classical heat balance integral method introduced in \cite{Go1958}  to solve problem (P) proposes the resolution of a problem that arises by replacing the equation (\ref{EcCalor}) by the condition (\ref{EcCalorAprox}), and the condition (\ref{CondStefan}) by the condition (\ref{CondStefanAprox}); that is, the resolution of the approximate problem defined as follows: conditions (\ref{CondConvect}), (\ref{TempFrontera}), (\ref{FrontInicial}), (\ref{CondStefanAprox}) and (\ref{EcCalorAprox}).

In \cite{Wo2001}, a variant of the classical heat balance integral method was proposed by replacing equation (\ref{EcCalor}) by condition (\ref{EcCalorAprox}), keeping all others conditions of the problem (P) equals;  that is, the resolution of an approximate problem defined as follows: conditions (\ref{CondConvect}), (\ref{TempFrontera}), (\ref{CondStefan}), (\ref{FrontInicial}) and (\ref{EcCalorAprox}).

From equation (\ref{EcCalor}) and condition (\ref{TempFrontera}) we can also obtain:
\begin{eqnarray}
\int\limits_0^{s(t)} \int\limits_0^x \frac{\partial T}{\partial t}(\xi,t) d\xi dx &=& -\dfrac{k}{\rho c} \left[T(0,t)+\frac{\partial T}{\partial x}(0,t)s(t) \right]~.\label{EcCalorAproxRIM} 
\end{eqnarray}

The refined  heat balance integral method introduced in \cite{SaSiCo2006} to solve the problem (P) proposes the resolution of the approximate problem that arises by replacing equation (\ref{EcCalor}) by condition (\ref{EcCalorAproxRIM}), keeping all others conditions of the problem (P) equal. It is defined as follows: conditions (\ref{CondConvect}), (\ref{TempFrontera}), (\ref{CondStefan}) (\ref{FrontInicial}) and (\ref{EcCalorAproxRIM}).

From the ideas put forward in \cite{Wo2001} and inspired by \cite{SaSiCo2006}, we develop an alternative of the refined  heat balance integral method. That is, the resolution of an approximate problem defined as follows: conditions (\ref{CondConvect}), (\ref{TempFrontera}), (\ref{FrontInicial}), (\ref{CondStefanAprox}) and (\ref{EcCalorAproxRIM}).

For solving the problems previously defined  we propose a quadratic temperature profile in space as follows:
\begin{equation}\label{Perfil}
\widetilde{ T}(x,t)=-\widetilde{A}\Theta_{\infty}\left(  1-\dfrac{x}{\widetilde{s}(t)}\right)-\widetilde{B}\Theta_{\infty}\left(  1-\dfrac{x}{\widetilde{s}(t)}\right)^2,\quad 0<x<\widetilde{s}(t),~ t>0. 
\end{equation}

Taking advantage of the fact of having the exact temperature  of the \mbox{problem} (P), it is physically reasonable to impose that the approximate temperature given by (\ref{Perfil}) behaves in a similar manner than the exact one given by (\ref{TempExacta}); that is: its sign, monotony and convexity in space. As $T$ verifies $T<0$, $\frac{\partial T}{\partial x}>0$ and $\frac{\partial^2 T}{\partial x^2}<0$  on $0<x<s(t)$, $t>0$, we enforce the following conditions on $\widetilde{T}$:
\begin{eqnarray*}
&& \widetilde{T}(x,t)< 0,\\
&&\dfrac{\partial \widetilde{T}}{\partial x}(x,t) = \dfrac{\Theta_{\infty}}{\widetilde{s}(t)} \left(\widetilde{A}+2\widetilde{B}\left(1 -\dfrac{x}{\widetilde{s}(t)}\right) \right)>0, \\
&&\dfrac{\partial ^2\widetilde{T}}{\partial x^2}(x,t) =- \dfrac{2\widetilde{B}\Theta_{\infty}}{\widetilde{s}^2(t)}<0,
\end{eqnarray*}
for all $0<x<\widetilde{s}(t)$, $t>0$. Therefore, we obtain that the constants $\widetilde{A}$ and $\widetilde{B}$ must be positive.

\subsection{Approximate solution using the classical heat balance integral method}

The classical heat balance integral method in order to solve the problem (P) proposes the resolution of the approximate problem defined as follows:

\textbf{Problem (P1)}. Find the temperature $T_{1}=T_{1}(x,t)$ at the solid region $0<x<s_{1}(t)$ and the location of the free boundary $x=s_{1}(t)$ such that:
\begin{align} 
& \dfrac{d}{dt} \int\limits_{0}^{s_{1}(t)} T_{1}(x,t) dx=\dfrac{k}{\rho c}\left[\dfrac{\rho \lambda}{k} \dot{s}_{1}(t)-\frac{\partial T_1}{\partial x}(0,t) \right]~, \ 0<x<s_{1}(t)~,   t>0~, \label{EcCalorP1}\\
& k\frac{\partial T_1}{\partial x}(0,t)=\dfrac{h}{\sqrt{t}} (T_{1}(0,t)+\Theta_{\infty})~,  \ t>0~, \label{CondConvP1}\\
&T_{1}(s_{1}(t),t)= 0~,\ t>0~, \label{TempFronteraP1}\\
&\left(\frac{\partial T_1}{\partial x}\right)^2(s_{1}(t),t)=-\frac{\lambda}{c} \frac{\partial^2 T_1}{\partial x^2}(s_{1}(t),t)~,\ t>0~, \label{PseudoStefan}\\
& s_{1}(0)=0~. \label{FrontinicialP1}
\end{align} 

A solution to  problem (P1) will be an approximate one of the problem (P). Proposing the following quadratic temperature profile in space:
\begin{equation}
T_{1}(x,t)= -A_{1}\Theta_{\infty}\left(1-\dfrac{x}{s_{1}(t)}\right) -B_{1}\Theta_{\infty} \left(1-\dfrac{x}{s_{1}(t)}\right)^{2}~, \label{TempP1}
\end{equation}
the free boundary  takes the form:
\begin{equation}
s_{1}(t)=2 \xi_{1} \sqrt{\alpha t}~,\label{FrontP1}
\end{equation}
where the constants $A_{1}, B_{1}$ and $\xi_{1}$ will be determined from the conditions (\ref{EcCalorP1}), (\ref{CondConvP1}) and (\ref{PseudoStefan}). Because of  (\ref{TempP1}) and (\ref{FrontP1}), the conditions (\ref{TempFronteraP1}) and (\ref{FrontinicialP1}) are immediately satisfied. From conditions (\ref{EcCalorP1}) and (\ref{CondConvP1}) we obtain:
\begin{equation}\label{A1}
A_{1}=\dfrac{6\text{Ste}-\left(6+2\text{Ste}\right)\xi_{1}^{2}-\frac{6}{\text{Bi}}\xi_{1}}{\text{Ste}\left(\xi_{1}^{2}+\frac{2}{\text{Bi}}\xi_{1}+3\right)}~,
\end{equation}
\begin{equation}\label{B1}
B_{1}=\dfrac{\left(3\text{Ste}+6\right)\xi_{1}^{2}+\frac{3}{\text{Bi}}\xi_{1}-3\text{Ste}}{\text{Ste}\left(\xi_{1}^{2}+\frac{2}{\text{Bi}}\xi_{1}+3\right)}~.
\end{equation}

From the fact that $A_1>0$ and $B_1>0$ we obtain that $0<\xi_1<\xi^{\max}$ and $\xi_1>\xi^{\min}>0$, respectively where:
\begin{equation}\label{Xi-MinMax}
\xi^{\min}=\dfrac{ \sqrt{\Delta^{\text{min}}}-\frac{1}{\Bi} }{2(2+\Ste)}~, \qquad \qquad \xi^{\text{max}}=\dfrac{ \sqrt{\Delta^{\text{max}}}-\frac{3}{\Bi} }{2(3+\Ste)}~,
\end{equation}
with
\begin{equation}\label{Delta-MinMax}
\Delta^{\min}=4\Ste^2+8\Ste+\frac{1}{\Bi^2}~, \qquad \Delta^{\max}=12\Ste^2+36\Ste+\frac{9}{\Bi^2}.
\end{equation}

We have that $\xi^{\min}<\xi^{\max}$. In fact: 

\begin{eqnarray*}
\xi^{\max}-\xi^{\min}>0 & \Leftrightarrow & (2+\Ste)\Bi\sqrt{\Delta^{\max}}-(3+\Ste)\Bi\sqrt{\Delta^{\min}}>3+2\Ste\\
& \Leftrightarrow & \left[(2+\Ste)^2 \Bi^2\Delta^{\max}+(3+\Ste)^2 \Bi^2 \Delta^{\min}-(3+2\Ste)^2\right]^2\\
& &-4(2+\Ste)^2(3+\Ste)^2 \Bi^4 \Delta^{\min} \Delta^{\max}>0\\
&\Leftrightarrow &16\Bi^4 \Ste^2 \left(2\Ste^3+12\Ste^2+27\Ste+18 \right)^2>0,
\end{eqnarray*}
which is automatically verified.

Since $A_{1}$ and $B_{1}$ are defined from the parameters $\xi_{1}$, $\text{Bi}$ and $\text{Ste}$, condition (\ref{PseudoStefan}) will be used to find the value of $\xi_{1}$. In this way, it turns out that $\xi_{1}$ must be a positive solution of the fourth degree polynomial equation:
\begin{eqnarray} 
&\left(12+9\text{Ste}+2\text{Ste}^{2}\right)z^{4}+\frac{21+6\text{Ste}}{\text{Bi}}z^{3}+\left(\frac{12}{\text{Bi}^{2}}-42\text{Ste}-12\text{Ste}^{2}-18\right)z^{2}+ \nonumber\\
&-\frac{30\text{Ste}+9}{\text{Bi}}z+9\text{Ste}\left(1+2\text{Ste} \right)=0~, \ \  \xi_1^{\min}<z<\xi_1^{\max}~. \label{Xi-1}
\end{eqnarray}

Let us  refer to $p_1=p_1(z)$ as the polinomial function defined by the l.h.s of equation (\ref{Xi-1}). Then we focus on studying the existence of  roots of $p_1$ in the desired interval $(\xi^{\min},\xi^{\max})$.

Descartes' rule of signs states that if the terms of a single-variable polynomial with real coefficients are ordered by descending variable exponent, then the number of positive roots of the polynomial is either equal to the number of sign differences between consecutive nonzero coefficients, or is less than it by an even number. 
Therefore, in our case we can assure that $p_1$ can have at most two roots in $\mathbb{R}^+$.

In order to prove that at least one of this two positive roots belongs to the required range, $(\xi_1^{\min},\xi_1^{\max})$, we  study the sign of $p_1$ in the extremes of the interval.

On one hand we have that:
\begin{eqnarray*}
p_1(\xi^{\min})=-Q_1 \sqrt{\Delta^{\min}}+ Q_2~,
\end{eqnarray*}
where
\begin{align*}
&Q_1= \frac{(2\Ste+3)^2\left(2\Bi^2(\Ste^2+2\Ste)+1\right)}{\Bi^3(2+\Ste)^4}~, \\
&Q_2= \frac{(2\Ste+3)^2\left(\left(2\, {\mathrm{Ste}}^4 + 8\, {\mathrm{Ste}}^3 + 8\, {\mathrm{Ste}}^2\right)\, {\mathrm{Bi}}^4 + \left(4\, {\mathrm{Ste}}^2 + 8\, \mathrm{Ste}\right)\, {\mathrm{Bi}}^2 + 1\right)}{\Bi^4(2+\Ste)^4}~.
\end{align*}

It is clear that $Q_1>0$ and $Q_2>0$. Therefore 
\begin{eqnarray*}
p_1(\xi^{\min})>0\quad & \Leftrightarrow &\quad Q_2^2-Q_1^2 \Delta^{\min}>0\\
&\Leftrightarrow & \quad 1024\Ste^4\left( 2\Ste^2+7\Ste+6\right)^4>0,
\end{eqnarray*}
which is automatically verified.

On the other hand we have that:
$$p_1(\xi^{\max})=Q_3 \sqrt{\Delta^{\max}}+ Q_4~,$$
where
\begin{eqnarray*}
Q_3&=& -\frac{3(2\Ste+3)^2\left(\Bi^2\left(\Ste^2-9 \right) +3\right)}{2\Bi^3 (3+\Ste)^4}~, \\
Q_4&=& -\tfrac{9}{2}(2\Ste+3)^2 \left[ 2\Bi^4 \Ste^3+ 3\Bi^2 (4\Bi^2-1)\Ste^2+ \right. \\
&+&\left.6\Bi^2 (3\Bi^2-1)\Ste+3(3\Bi^2-1)\right] \Bi^{-4} (3+\Ste)^{-4}~.
\end{eqnarray*}

An easy computation shows that:
\begin{equation*}
Q_4^2-\Delta^{\text{max}} Q_3^2=6912 \Ste^2 (3\Bi^2-1)(2\Ste^2+9\Ste+9)^4.
\end{equation*}

Therefore, it is clear that the following properties are satisfied:

\begin{enumerate}[a.]
\item If $\Bi<\frac{\sqrt{3}}{3}$ then:
\begin{enumerate}[i) ]
\item  $Q_3<0, \quad \forall\ \Ste>0$.
\item  $Q_4^2-\Delta^{\max}Q_3^2<0,\quad \forall \ \Ste>0$.
\end{enumerate}
\item If $\Bi>\frac{\sqrt{3}}{3}$ then:
\begin{enumerate}[i) ]
\item  $Q_3>0\ \text{ if } \ \Ste<\sqrt{9-\frac{3}{\Bi^2}}$ \quad and \quad $Q_3<0\ \text{ if } \ \Ste>\sqrt{9-\frac{3}{\Bi^2}}$.
\item $Q_4<0, \quad \forall\ \Ste>0$.
\item  $Q_4^2-\Delta^{\max}Q_3^2>0,\quad \forall\  \Ste>0$.
\end{enumerate}
\item If $\Bi=\frac{\sqrt{3}}{3}$ then: 
\begin{enumerate}[i) ]
\item $Q_3<0, \quad \forall\ \Ste>0$.
\item $Q_4<0, \quad \forall\ \Ste>0$.
\end{enumerate}

\end{enumerate}
Then, let us prove that $p_1(\xi^{\max})<0$, $\forall \ \Ste>0$, $\forall \ \Bi>0$.

In case $\Bi<\frac{\sqrt{3}}{3}$, we have from property a.~i) that $Q_3<0$.

\quad  If $Q_4\leq 0$, then $p_1(\xi^{\max})<0$ immediately.

\quad  If $Q_4>0$ then
\begin{eqnarray*}
p_1(\xi^{\max})<0 &\Leftrightarrow &Q_3\sqrt{\Delta^{\max}}+Q_4<0\\
                  &\Leftrightarrow &Q_3\sqrt{\Delta^{\max}}<-Q_4<0\\
                  &\Leftrightarrow & Q_4^2-\Delta^{\max}Q_3^2<0,
\end{eqnarray*}
\quad \quad which is immediately verified from property a.~ii).

In case $\Bi>\frac{\sqrt{3}}{3}$, we have from property b.~ii) that $Q_4<0$. 

\quad  If $Q_3\leq 0$, then $p_1(\xi^{\max})<0$ immediately.

\quad  If $Q_3>0$ then
\begin{eqnarray*}
p_1(\xi^{\max})<0 &\Leftrightarrow &Q_3\sqrt{\Delta^{\max}}+Q_4<0\\
                  &\Leftrightarrow &0<Q_3\sqrt{\Delta^{\max}}<-Q_4\\
                  &\Leftrightarrow & Q_4^2-\Delta^{\max}Q_3^2>0,
\end{eqnarray*}
\quad \quad which is immediately verified from property b.~iii).
\vspace{0.15cm}

In case $\Bi=\frac{\sqrt{3}}{3}$, we have from properties c.~i) and c.~ii) that $Q_3<0$ and $Q_4<0$ which obviously imply that $p_1(\xi^{\max})<0$.

So far we can claim that  $p_1(\xi^{\min})>0$ and $p_1(\xi^{\max})<0$, $\forall~ \Ste>0$,\ \mbox{$\forall~ \Bi>0$}. In addition, from the fact that $p_1$ has at most two roots in $\mathbb{R}^+$ and \mbox{$p_1(+\infty)=+\infty$}, we can conclude that $p_1$ has exactly one root on the interval $(\xi^{\min},\xi^{\max})$.

All the above analysis can be summarized in the following theorem:

\begin{teo} The solution to the problem \emph{(P1)}, for a quadratic profile in space, is given by (\ref{TempP1})-(\ref{FrontP1}), where the positive constants $A_1$ and $B_1$ are defined by (\ref{A1}) and (\ref{B1}) respectively and $\xi_1$ is the unique solution of the polynomial equation (\ref{Xi-1}) where $\xi^{\min}$ and $\xi^{\max}$ are defined in (\ref{Xi-MinMax}).

\end{teo}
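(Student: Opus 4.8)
The plan is to reduce the free-boundary problem (P1) to a finite algebraic system by substituting the quadratic ansatz (\ref{TempP1}) together with the self-similar free boundary (\ref{FrontP1}). First I would observe that this choice makes the boundary condition (\ref{TempFronteraP1}) and the initial condition (\ref{FrontinicialP1}) hold identically, so that only the three remaining conditions---the integral heat balance (\ref{EcCalorP1}), the convective condition (\ref{CondConvP1}), and the pseudo-Stefan condition (\ref{PseudoStefan})---need to be enforced. These will determine the three unknowns $A_1$, $B_1$ and $\xi_1$.

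Next, I would treat $\xi_1$ as a parameter and extract $A_1$ and $B_1$ from the two conditions that are linear in them. Substituting (\ref{TempP1})--(\ref{FrontP1}) into (\ref{EcCalorP1}) and (\ref{CondConvP1}) yields a $2\times 2$ linear system whose solution is exactly (\ref{A1})--(\ref{B1}); here the self-similar scaling is what guarantees that all the time dependence cancels, leaving genuinely constant coefficients. To locate the admissible range for $\xi_1$, I would impose the physically motivated positivity requirements $A_1>0$ and $B_1>0$. Since the numerators of $A_1$ and $B_1$ are quadratics in $\xi_1$ with positive denominators, this reduces to a pair of quadratic inequalities; solving them produces the thresholds $\xi^{\min}$ and $\xi^{\max}$ in (\ref{Xi-MinMax})--(\ref{Delta-MinMax}), and a short computation (as displayed above) confirms $\xi^{\min}<\xi^{\max}$, so the interval $(\xi^{\min},\xi^{\max})$ is nonempty.

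With $A_1$ and $B_1$ so determined, I would insert them into the pseudo-Stefan condition (\ref{PseudoStefan}), which after clearing denominators produces the quartic $p_1(z)=0$ of (\ref{Xi-1}). Existence and uniqueness of $\xi_1$ in $(\xi^{\min},\xi^{\max})$ then rest on three ingredients: (i) Descartes' rule of signs, which caps the number of positive real roots of $p_1$ at two; (ii) the endpoint evaluations $p_1(\xi^{\min})>0$ and $p_1(\xi^{\max})<0$; and (iii) the limit $p_1(+\infty)=+\infty$. From (ii) and the intermediate value theorem there is at least one root in $(\xi^{\min},\xi^{\max})$, and from $p_1(\xi^{\max})<0$ together with (iii) there is at least one more root in $(\xi^{\max},+\infty)$; by (i) these two roots account for all the positive roots of $p_1$, so precisely one lies in the target interval, and it is $\xi_1$.

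The hard part will be the endpoint sign computations, which are the only genuinely delicate step. Substituting the radical expressions for $\xi^{\min}$ and $\xi^{\max}$ into the quartic and reducing leads to expressions of the form $-Q_1\sqrt{\Delta^{\min}}+Q_2$ and $Q_3\sqrt{\Delta^{\max}}+Q_4$; to decide their signs one must rationalize and examine $Q_2^2-Q_1^2\Delta^{\min}$ and $Q_4^2-Q_3^2\Delta^{\max}$, which factor as manifestly sign-definite polynomials in $\Ste$ and $\Bi$. The evaluation at $\xi^{\min}$ is unconditional, but the evaluation at $\xi^{\max}$ requires splitting according to whether $\Bi$ is smaller than, larger than, or equal to $\tfrac{\sqrt{3}}{3}$, since the signs of $Q_3$ and $Q_4$---and hence the direction of the rationalization argument---change across this threshold. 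Once these sign facts are in hand, as catalogued in properties a.--c.\ above, the conclusion of the theorem follows.
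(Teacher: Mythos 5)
Your proposal is correct and follows essentially the same route as the paper: reduce (P1) to an algebraic system via the quadratic ansatz, solve for $A_1,B_1$ from (\ref{EcCalorP1})--(\ref{CondConvP1}), derive the interval $(\xi^{\min},\xi^{\max})$ from positivity, and then combine Descartes' rule, the endpoint signs $p_1(\xi^{\min})>0$, $p_1(\xi^{\max})<0$ (with the same rationalization and case split on $\Bi$ versus $\tfrac{\sqrt{3}}{3}$), and $p_1(+\infty)=+\infty$ to isolate a unique root. Your root-counting step is in fact spelled out slightly more explicitly than the paper's, but the argument is the same.
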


\vspace{0.10cm}
\subsection{Approximate solution using an alternative of the heat balance integral method}

An alternative method of the classical heat balance integral method in order to solve the problem (P) proposes the resolution of the approximate problem defined as follows:

\textbf{Problem (P2)}. Find the temperature $T_{2}=T_{2}(x,t)$ at the solid region $0<x<s_{2}(t)$ and  the location of the free boundary $x=s_{2}(t)$ such that:
\begin{align}
&\dfrac{d}{dt} \int\limits_{0}^{s_{2}(t)} T_{2}(x,t) dx=\dfrac{k}{\rho c}\left[\dfrac{\rho \lambda}{k} \dot{s}_{2}(t)-\frac{\partial T_2}{\partial x}(0,t) \right]~, \ 0<x<s_{2}(t)~, \ t>0~, \label{EcCalorP2}\\
& k\frac{\partial T_2}{\partial x}(0,t)=\dfrac{h}{\sqrt{t}} (T_{2}(0,t)+\Theta_{\infty})~, \ t>0~, \label{CondConvP2}\\
& T_{2}(s_{2}(t),t)= 0~, \ t>0~, \label{TempFronteraP2}\\
& k\frac{\partial T_2}{\partial x}(s_{2}(t),t)=\rho \lambda \dot{s}_{2}(t)~, \ t>0~, \label{CondStefanP2} \\
& s_{2}(0)=0~. \label{FrontinicialP2}
\end{align} 

A solution to the problem (P2), for a quadratic temperature profile in space, is obtained by
\begin{equation}
T_{2}(x,t)= -A_{2}\Theta_{\infty}\left(1-\dfrac{x}{s_{2}(t)}\right) -B_{2}\Theta_{\infty} \left(1-\dfrac{x}{s_{2}(t)}\right)^{2}~, \label{TempP2}
\end{equation}
\begin{equation}
s_{2}(t)=2 \xi_{2} \sqrt{\alpha t}~,\label{FrontP2}
\end{equation}
where the constants $A_{2}, B_{2}$ y $\xi_{2}$ will be determined from the conditions (\ref{EcCalorP2}), (\ref{CondConvP2}) and (\ref{CondStefanP2}) of the problem (P2). The conditions (\ref{TempFronteraP2}) and (\ref{FrontinicialP2}) are immediately satisfied. From conditions (\ref{EcCalorP2}) and (\ref{CondConvP2}), we obtain:
\begin{equation}\label{A2}
A_{2}=\dfrac{6\text{Ste}-\left(6+2\text{Ste}\right)\xi_{2}^{2}-\frac{6}{\text{Bi}}\xi_{2}}{\text{Ste}\left(\xi_{2}^{2}+\frac{2}{\text{Bi}}\xi_{2}+3\right)}~,
\end{equation}
\begin{equation}\label{B2}
B_{2}=\dfrac{\left(3\text{Ste}+6\right)\xi_{2}^{2}+\frac{3}{\text{Bi}}\xi_{2}-3\text{Ste}}{\text{Ste}\left(\xi_{2}^{2}+\frac{2}{\text{Bi}}\xi_{2}+3\right)}~.
\end{equation}

Since $A_2$ and $B_2$ must be positive  we obtain, as in problem (P1), that $0<\xi^{\min}<\xi_2<\xi^{\max}$ where $\xi^{\min}$  and $\xi^{\max}$  are defined by (\ref{Xi-MinMax}).

The constants $A_{2}$ and $B_{2}$, are expressed as a function of the parameters
$\xi_{2}$, $\text{Bi}$ and $\text{Ste}$. By using condition (\ref{CondStefanP2}), the coefficient $\xi_{2}$ is a  positive solution of the fourth degree polynomial equation given by:
\begin{eqnarray}
z^{4}+\frac{2}{\text{Bi}}z^{3}+\left(6+\text{Ste}\right)z^{2}+\frac{3}{\text{Bi}}z-3\text{Ste}=0~, \qquad \quad \xi^{\min}<z<\xi^{\max}. \label{Xi-2}
\end{eqnarray}

Notice that the polynomial function $p_2=p_2(z)$ defined by the l.h.s of equation (\ref{Xi-2}) is an increasing function in $\mathbb{R}^+$ that assumes a negative value at $z=0$ and goes to $+\infty$ when $z$ goes to $+\infty$. Hence, $p_2$ has a unique root in $\mathbb{R}^{+}$.
  
If we analyse the behaviour  of $p_2$ on the interval $(\xi^{\min}, \xi^{\max})$ we can observe that:
$$p_2(\xi^{\min})=R_1 \sqrt{\Delta^{\min}}-R_2$$
where $\Delta^{\min}$ is defined by equation (\ref{Delta-MinMax}) and 
\begin{eqnarray*}
R_1&=& \dfrac{(2\Ste+3)(2\Bi^2\Ste^2+4\Bi^2\Ste+1)}{2\Bi^3 (2+\Ste)^4}>0~,\\
R_2&=&\left(\dfrac{\Ste^2(2\Ste+3)}{(2+\Ste)^2}+\dfrac{2\Ste (2\Ste^2+7\Ste+6)}{\Bi^2(2+\Ste)^4}+\dfrac{2\Ste+3}{2\Bi^4 (2+\Ste)^4} \right)>0~, 
\end{eqnarray*}
then
\begin{eqnarray*}
p_2(\xi^{\min})<0\quad & \Leftrightarrow &\quad R_2^2-R_1^2 \Delta^{\min}>0\\
&\Leftrightarrow & \quad 256 \Ste^4(2\Ste+3)^2(\Ste+2)^4>0,
\end{eqnarray*}
which is immediately satisfied.

Notice that $p_2(z)>\widehat{p}_2(z)$ for all $z \in \mathbb{R}^+$, where $\widehat{p}_2(z)=(6+\Ste)z^2+\frac{3}{\Bi}z-3\Ste$. Furthermore 
$$\widehat{p}_2(\xi^{\max})=-R_3\sqrt{\Delta^{\max}}+R_4$$
where $\Delta^{\max}$ is defined by equation (\ref{Delta-MinMax}) and 
\begin{eqnarray*}
R_3&=& \dfrac{9}{2\Bi(3+\Ste)^2}>0,\\
R_4&=& \dfrac{9\Ste}{(3+\Ste)}+\dfrac{27}{2 \Bi^2(3+\Ste)^2}>0.
\end{eqnarray*}

Then,
$$p_2\left(\xi^{\max}\right)>\widehat{p_2}\left(\xi^{\max}\right)>0  \Leftrightarrow  R_4^2-R_3^2\Delta^{\max}>0  \Leftrightarrow  1296 \Ste^2(\Ste+3)^2>0,$$
which is automatically verified.

As consequence, equation (\ref{Xi-2}) has a unique solution $\xi_2$ in the interval $(\xi^{\min}, \xi^{\max})$. Therefore, we have proved the following theorem:

\begin{teo} The solution to the problem \emph{(P2)}, for a quadratic profile in space, is given by (\ref{TempP2})-(\ref{FrontP2}), where the positive constants $A_2$ and $B_2$ are defined by (\ref{A2}) and (\ref{B2}) respectively and $\xi_2$ is the unique solution of the polynomial equation (\ref{Xi-2}) where $\xi^{\min}$ and $\xi^{\max}$ are defined in (\ref{Xi-MinMax}).

\end{teo}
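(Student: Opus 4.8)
The plan is to derive the defining polynomial equation \eqref{Xi-2} for $\xi_2$ and then localise its root inside the interval $(\xi^{\min},\xi^{\max})$, exactly mirroring the structure used for Theorem~\ref{teo:1} but exploiting the simpler, strictly increasing nature of $p_2$. First I would verify that the profile \eqref{TempP2} together with the free-boundary form \eqref{FrontP2} satisfies conditions \eqref{TempFronteraP2} and \eqref{FrontinicialP2} automatically, since $\widetilde{T}(s_2(t),t)=0$ and $s_2(0)=0$ hold by construction. Substituting \eqref{TempP2}--\eqref{FrontP2} into the integral condition \eqref{EcCalorP2} and the convective condition \eqref{CondConvP2} produces two linear relations in $A_2,B_2$, from which the closed forms \eqref{A2}--\eqref{B2} follow by routine algebra. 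The positivity requirement on $A_2,B_2$ then confines any admissible $\xi_2$ to $(\xi^{\min},\xi^{\max})$, precisely as in problem (P1), because the numerators of $A_2$ and $B_2$ are identical to those of $A_1$ and $B_1$.

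Next I would impose the Stefan-type condition \eqref{CondStefanP2}. Inserting the explicit $A_2,B_2$ and simplifying yields the quartic \eqref{Xi-2}, whose left-hand side defines $p_2$. The key observation that makes this case easier than (P1) is that $p_2'(z)=4z^3+\tfrac{6}{\mathrm{Bi}}z^2+2(6+\mathrm{Ste})z+\tfrac{3}{\mathrm{Bi}}>0$ for all $z>0$, so $p_2$ is strictly increasing on $\mathbb{R}^+$; together with $p_2(0)=-3\,\mathrm{Ste}<0$ and $p_2(z)\to+\infty$, this guarantees a \emph{unique} positive root without invoking Descartes' rule. It then suffices to show this root lies in $(\xi^{\min},\xi^{\max})$, i.e.\ that $p_2(\xi^{\min})<0<p_2(\xi^{\max})$.

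The sign of $p_2$ at the endpoints is the computational heart of the argument. For the lower endpoint I would evaluate $p_2(\xi^{\min})=R_1\sqrt{\Delta^{\min}}-R_2$ with $R_1,R_2>0$ as given, so that $p_2(\xi^{\min})<0 \Leftrightarrow R_2^2-R_1^2\Delta^{\min}>0$; squaring collapses the surd and reduces the inequality to $256\,\mathrm{Ste}^4(2\,\mathrm{Ste}+3)^2(\mathrm{Ste}+2)^4>0$, which is manifest. For the upper endpoint I would avoid estimating $p_2(\xi^{\max})$ directly and instead use the minorant $\widehat{p}_2(z)=(6+\mathrm{Ste})z^2+\tfrac{3}{\mathrm{Bi}}z-3\,\mathrm{Ste}$, noting $p_2(z)>\widehat{p}_2(z)$ for $z>0$ since the discarded terms $z^4+\tfrac{2}{\mathrm{Bi}}z^3$ are positive. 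Writing $\widehat{p}_2(\xi^{\max})=-R_3\sqrt{\Delta^{\max}}+R_4$ and again squaring reduces $\widehat{p}_2(\xi^{\max})>0$ to $R_4^2-R_3^2\Delta^{\max}=1296\,\mathrm{Ste}^2(\mathrm{Ste}+3)^2>0$, whence $p_2(\xi^{\max})>\widehat{p}_2(\xi^{\max})>0$.

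The main obstacle I anticipate is purely algebraic rather than conceptual: the endpoint evaluations require substituting the irrational quantities $\xi^{\min}$ and $\xi^{\max}$ from \eqref{Xi-MinMax} into the quartic and organising the result into the clean form $(\text{positive coefficient})\sqrt{\Delta}\pm(\text{positive coefficient})$, after which the squared differences must factor into the stated perfect-power expressions. Verifying those factorisations by hand is error-prone, so I would rely on a computer-algebra check for the identities $R_2^2-R_1^2\Delta^{\min}=256\,\mathrm{Ste}^4(2\,\mathrm{Ste}+3)^2(\mathrm{Ste}+2)^4$ and $R_4^2-R_3^2\Delta^{\max}=1296\,\mathrm{Ste}^2(\mathrm{Ste}+3)^2$. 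Once both endpoint signs are secured, the intermediate value theorem combined with strict monotonicity yields the unique root $\xi_2\in(\xi^{\min},\xi^{\max})$, completing the proof of the theorem.
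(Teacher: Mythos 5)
Your proposal is correct and follows essentially the same route as the paper: derive $A_2,B_2$ from \eqref{EcCalorP2}--\eqref{CondConvP2}, confine $\xi_2$ to $(\xi^{\min},\xi^{\max})$ via positivity, obtain the quartic \eqref{Xi-2} from the Stefan condition, use monotonicity of $p_2$ plus $p_2(0)<0$ for uniqueness of the positive root, and localise it by the sign evaluations $p_2(\xi^{\min})<0$ (via $R_1,R_2$) and $p_2(\xi^{\max})>\widehat{p}_2(\xi^{\max})>0$ (via the minorant and $R_3,R_4$), with the same squared-difference factorisations. Your explicit computation of $p_2'(z)>0$ is a small, welcome tightening of the paper's bare assertion that $p_2$ is increasing on $\mathbb{R}^+$.
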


\vspace{0.1cm}
\subsection{Approximate solution using the refined heat balance integral method}
The refined heat balance integral method in order to solve the problem (P), proposes the resolution of an approximate problem formulated as follows:

\textbf{Problem (P3)}. Find the temperature $T_{3}=T_{3}(x,t)$ at the solid region $0<x<s_{3}(t)$ and the location of the free boundary $x=s_{3}(t)$ such that:
\begin{align} 
& \int\limits_0^{s_{3}(t)} \int\limits_0^x \frac{\partial T_3}{\partial t} (\xi,t) d\xi dx =  -\dfrac{k}{\rho c} \left[T_{3}(0,t)+ \right. \qquad \qquad  \qquad  \qquad \qquad \qquad \qquad \nonumber \\
&\qquad \qquad  \qquad \quad \qquad +\left.\frac{\partial T_3}{\partial x}(0,t)s_{3}(t) \right], \ 0<x<s_3(t), \ t>0, \label{EcCalorP3}\\
& k\dfrac{\partial T_3}{\partial x}(0,t)=\dfrac{h}{\sqrt{t}} (T_{3}(0,t)+\Theta_{\infty})~, \ t>0~, \label{CondConvP3}\\
& T_{3}(s_{3}(t),t)= 0~, \ t>0~, \label{TempFronteraP3}\\
 &k\dfrac{\partial T_3}{\partial x}(s_{3}(t),t)=\rho \lambda \dot{s}_{3}(t)~, \ t>0~, \label{CondStefanP3} \\
& s_{3}(0)=0~. \label{FrontinicialP3}
\end{align}

A solution to the problem (P3) for a quadratic temperature profile in space is given by:
\begin{equation}
T_{3}(x,t)=-A_{3}\Theta_{\infty}\left(1-\dfrac{x}{s_{3}(t)}\right) -B_{3}\Theta_{\infty} \left(1-\dfrac{x}{s_{3}(t)}\right)^{2}~, \label{TempP3}
\end{equation}
and the free boundary is obtained of the form:
\begin{equation}
s_{3}(t)=2 \xi_{3} \sqrt{\alpha t}~,\label{FrontP3}
\end{equation}
where the constants $A_{3}, B_{3}$ y $\xi_{3}$ will be determined from the conditions (\ref{EcCalorP3}), (\ref{CondConvP3}) and (\ref{CondStefanP3}) of the problem (P3). From conditions (\ref{EcCalorP3}) and (\ref{CondConvP3}) we obtain:
\begin{equation}\label{A3}
A_{3}=\dfrac{2\xi_{3}(3-\xi_3^{2})}{\frac{1}{\text{Bi}}\xi_{3}^{2}+6\xi_{3}+\frac{3}{\text{Bi}}}~,
\end{equation}
\begin{equation}\label{B3}
B_{3}=\dfrac{2\xi_{3}^{3}}{\frac{1}{\text{Bi}}\xi_{3}^{2}+6\xi_{3}+\frac{3}{\text{Bi}}}~.
\end{equation}

As is already know $A_3$ and $B_3$ must be positive thus we obtain that $0<\xi_3<\sqrt{3}$. On the other hand, since $A_{3}$ and $B_{3}$ are defined from the parameters $\xi_{3}$ and $\text{Bi}$, condition (\ref{CondStefanP3}) will be used to find the value of $\xi_{3}$. In this way it turns out that $\xi_{3}$ is a positive solution of the third degree polynomial equation:
 \begin{eqnarray}
\frac{1}{\text{Bi}}z^{3}+\left(6+\text{Ste}\right)z^{2}+\frac{3}{\text{Bi}}z-3\text{Ste}=0~, \qquad 0<z<\sqrt{3}~. \label{Xi-3}
\end{eqnarray}

It is clear that the polynomial function $p_3=p_3(z)$ defined by the l.h.s of equation (\ref{Xi-3}) has a unique root in $\mathbb{R}^{+}$. Moreover, 
since we have that  
  \begin{eqnarray*}
p_3(0)&=&-3\Ste<0,\\
p_3(\sqrt{3})&=&\frac{6\sqrt{3}}{\Bi}+18>0,
\end{eqnarray*} 
we can assure that  the unique positive solution $\xi_3$ to equation (\ref{Xi-3})  belongs to  the range $(0, \sqrt{3})$. 

Therefore, we have proved the following theorem:

\begin{teo} The solution to the problem \emph{(P3)}, for a quadratic profile in space, is given by (\ref{TempP3})-(\ref{FrontP3}), where the positive constants $A_3$ and $B_3$ are defined by (\ref{A3}) and (\ref{B3}) respectively and $\xi_3$ is the unique solution of the polynomial equation (\ref{Xi-3}).

\end{teo}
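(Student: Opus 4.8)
The plan is to follow the same strategy used for problems (P1) and (P2): insert the self-similar quadratic ansatz into the approximate problem, use the two ``driving'' conditions to express the profile coefficients in terms of the single unknown $\xi_3$, and then reduce the remaining condition to a polynomial whose relevant root pins down $\xi_3$.

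First I would substitute (\ref{TempP3}) and (\ref{FrontP3}) into the formulation of problem (P3). By construction $T_3(s_3(t),t)=0$, so (\ref{TempFronteraP3}) holds, and $s_3(0)=0$ gives (\ref{FrontinicialP3}); hence these two conditions impose nothing, and the ansatz carries exactly the three free parameters $A_3,B_3,\xi_3$. Next I would determine $A_3$ and $B_3$ from the refined energy balance (\ref{EcCalorP3}) and the convective condition (\ref{CondConvP3}). Evaluating $T_3(0,t)=-\Theta_\infty(A_3+B_3)$ and $\partial_x T_3(0,t)=\frac{\Theta_\infty}{s_3}(A_3+2B_3)$, condition (\ref{CondConvP3}) yields one linear relation between $A_3$ and $B_3$: the factors $1/\sqrt t$ and $s_3\propto\sqrt t$ cancel (using $\Bi=h\sqrt\alpha/k$), confirming the ansatz is self-consistent in time. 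Condition (\ref{EcCalorP3}) requires the iterated integral $\int_0^{s_3}\int_0^x \partial_t T_3\,d\xi\,dx$; since $T_3$ depends on $x$ only through $x/s_3$ and $\dot s_3=s_3/(2t)$, the integral is proportional to the combination $s_3^2/t=4\xi_3^2\alpha$, which is constant, so both sides are time-independent and (\ref{EcCalorP3}) produces the second linear relation. Solving the resulting $2\times 2$ system gives the closed forms (\ref{A3}) and (\ref{B3}).

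Then I would extract the admissible range for $\xi_3$ from positivity of the coefficients. Because the common denominator $\frac{1}{\Bi}\xi_3^2+6\xi_3+\frac{3}{\Bi}$ is strictly positive for $\xi_3>0$, the sign of $B_3$ is that of $\xi_3^3>0$ (always satisfied), while $A_3>0$ is equivalent to $3-\xi_3^2>0$; hence the constraint is precisely $0<\xi_3<\sqrt3$. Finally I would impose the Stefan condition (\ref{CondStefanP3}), which after substituting (\ref{A3})--(\ref{B3}) collapses to the cubic (\ref{Xi-3}). To locate its root I would note that $p_3'(z)=\frac{3}{\Bi}z^2+2(6+\Ste)z+\frac{3}{\Bi}>0$ for every $z\ge0$, so $p_3$ is strictly increasing on $[0,\infty)$ and has at most one positive root; since $p_3(0)=-3\Ste<0$ and $p_3(\sqrt3)=\frac{6\sqrt3}{\Bi}+18>0$, the intermediate value theorem places this unique root inside $(0,\sqrt3)$, completing the proof.

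The only genuinely laborious step is the derivation of (\ref{A3})--(\ref{B3}) from the double-integral condition (\ref{EcCalorP3}): the bookkeeping of the nested integral and the cancellation of the time dependence via $s_3^2/t=\text{const}$ is where an error is most likely to creep in. By contrast, the root-location argument is far easier here than for (P1): since the relevant polynomial is only cubic and $p_3$ is monotone on $\mathbb{R}^+$, the delicate ``two positive roots'' subtlety that forced the careful $p_1(\xi^{\min})$, $p_1(\xi^{\max})$ sign analysis does not arise, and a single sign check at the endpoints of $(0,\sqrt3)$ suffices.
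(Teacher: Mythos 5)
Your proposal is correct and follows essentially the same route as the paper: determine $A_3$ and $B_3$ from (\ref{EcCalorP3}) and (\ref{CondConvP3}), deduce $0<\xi_3<\sqrt{3}$ from positivity of the coefficients, reduce (\ref{CondStefanP3}) to the cubic (\ref{Xi-3}), and locate its unique positive root in $(0,\sqrt{3})$ via the sign checks $p_3(0)=-3\,\text{Ste}<0$ and $p_3(\sqrt{3})=\frac{6\sqrt{3}}{\text{Bi}}+18>0$. The only cosmetic difference is that you justify uniqueness of the positive root by computing $p_3'(z)>0$, whereas the paper simply asserts it (it follows at once from the single sign change in the coefficients); both are fine.
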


\subsection{Approximate solution using an alternative of the refined heat balance integral method}
On this subsection, we develop an alternative  of the refined heat balance integral method to solve the problem (P). This method may consist on  the resolution of the approximate problem defined as follows:

\textbf{Problem (P4)}. Find the temperature $T_{4}=T_{4}(x,t)$ at the solid region $0<x<s_{4}(t)$ and the location of the free boundary $x=s_{4}(t)$ such that:
\begin{align} 
& \int\limits_0^{s_{4}(t)} \int\limits_0^x \frac{\partial T_4}{\partial t} (\xi,t) d\xi dx =  -\dfrac{k}{\rho c} \left[T_{4}(0,t)+ \right. \qquad \qquad  \qquad  \qquad \qquad \qquad \qquad \nonumber \\
&\qquad \qquad  \qquad \quad \qquad +\left.\frac{\partial T_4}{\partial x}(0,t)s_{4}(t) \right], \ 0<x<s_4(t), \ t>0, \label{EcCalorP4}\\
& k\dfrac{\partial T_4}{\partial x}(0,t)=\dfrac{h}{\sqrt{t}} (T_{4}(0,t)+\Theta_{\infty})~, \ t>0~, \label{CondConvP4}\\
& T_{4}(s_{4}(t),t)= 0~, \ t>0~, \label{TempFronteraP4}\\
& \left(\dfrac{\partial T_4}{\partial x}\right)^2(s_{4}(t),t)=-\frac{\lambda}{c}\dfrac{\partial ^2 T_4}{\partial x^2}(s_{4}(t),t)~, \ t>0~, \label{PseudoStefan4}\\
& s_{4}(0)=0~. \label{FrontinicialP4}
\end{align}

A solution to the problem (P4) for a quadratic temperature profile in space is given by:
\begin{equation}
T_{4}(x,t)= -A_{4}\Theta_{\infty}\left(1-\dfrac{x}{s_{4}(t)}\right) -B_{4}\Theta_{\infty} \left(1-\dfrac{x}{s_{4}(t)}\right)^{2}~, \label{TempP4}
\end{equation}
and the free boundary is obtained of the form:
\begin{equation}
s_{4}(t)=2 \xi_{4} \sqrt{\alpha t}~,\label{FrontP4}
\end{equation}
where the constants $A_{4}, B_{4}$ y $\xi_{4}$ will be determined from the conditions (\ref{EcCalorP4}), (\ref{CondConvP4}) and (\ref{PseudoStefan4}). From conditions (\ref{EcCalorP4}) and (\ref{CondConvP4}) we obtain:
\begin{equation}\label{A4}
A_{4}=\dfrac{2\xi_{4}(3-\xi_4^{2})}{\frac{1}{\text{Bi}}\xi_{4}^{2}+6\xi_{4}+\frac{3}{\text{Bi}}}~,
\end{equation}
\begin{equation}\label{B4}
B_{4}=\dfrac{2\xi_{4}^{3}}{\frac{1}{\text{Bi}}\xi_{4}^{2}+6\xi_{4}+\frac{3}{\text{Bi}}}~.
\end{equation}

As we know, $A_4$ and $B_4$ must be positive, thus $0<\xi_4<\sqrt{3}$. Moreover, since $A_{4}$ and $B_{4}$ are defined from the parameters $\xi_{4}$ and $\text{Bi}$, condition (\ref{PseudoStefan4}) is used to find the value of $\xi_{4}$. In this way, it turns out that $\xi_{4}$ is a positive solution of the fourth degree polynomial equation:
 \begin{eqnarray}
\text{Ste}\ z^{4}-\frac{1}{\text{Bi}}z^{3}-6\left(1+\text{Ste} \right)z^2-\frac{3}{\text{ Bi}}z+9\text{Ste}=0~, \quad 0<z<\sqrt{3}~. \label{Xi-4}
\end{eqnarray}

Notice that using Descartes' rule of signs, we can assure that the polinomial function $p_4=p_4(z)$ defined by the l.h.s of equation (\ref{Xi-4}) can have at most two possible roots in $\mathbb{R}^+$.

If we restrict the domain of $p_4$ to the interval $(0,\sqrt{3})$ we can observe that:
\begin{eqnarray*}
p_4(0)&=&9\Ste>0,\\
p_4(\sqrt{3})&=&-\dfrac{6\sqrt{3}}{\Bi}-18<0.
\end{eqnarray*}

On the other hand we can see that $p_4(+\infty)=+\infty$. Therefore the equation (\ref{Xi-4}) has exactly two different solutions in $\mathbb{R}^+$ and a unique solution $\xi_4$ in the interval $(0, \sqrt{3})$. Then, we have the following theorem:

\begin{teo} The solution to the problem \emph{(P4)}, for a quadratic profile in space, is given by (\ref{TempP4})-(\ref{FrontP4}), where the positive constants $A_4$ and $B_4$ are defined by (\ref{A4}) and (\ref{B4}) respectively and $\xi_4$ is the unique solution of the polynomial equation (\ref{Xi-4}).

\end{teo}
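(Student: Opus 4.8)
The plan is to follow exactly the scheme already used for Problems (P1)--(P3): substitute the proposed profile into those conditions of (P4) that are not satisfied identically, reduce the problem to a single algebraic equation for $\xi_4$, and then establish existence and uniqueness of a root in the admissible interval.

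First I would note that the ansatz (\ref{TempP4})--(\ref{FrontP4}) satisfies (\ref{TempFronteraP4}) and (\ref{FrontinicialP4}) automatically, since both bracketed factors of $T_4$ vanish at $x=s_4(t)$ and $s_4(0)=0$. Hence only (\ref{EcCalorP4}), (\ref{CondConvP4}) and (\ref{PseudoStefan4}) are informative. Inserting the profile into (\ref{EcCalorP4}) and (\ref{CondConvP4}) gives a linear system for $A_4$ and $B_4$ whose solution is (\ref{A4})--(\ref{B4}); this step is word-for-word the computation already performed for (P3), since the two problems share these two conditions. The only genuine difference between (P3) and (P4) therefore lies in the closing condition, and it is this that I would handle next.

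Imposing positivity of $A_4$ and $B_4$ fixes the admissible interval. For $\xi_4>0$ the denominator $\frac{1}{\Bi}\xi_4^2+6\xi_4+\frac{3}{\Bi}$ is positive, so $B_4>0$ holds for free; the sign of $A_4$ is then governed by the numerator $2\xi_4(3-\xi_4^2)$, which forces $0<\xi_4<\sqrt{3}$. Substituting (\ref{A4})--(\ref{B4}) into condition (\ref{PseudoStefan4}) and clearing denominators reduces everything to the quartic (\ref{Xi-4}), whose left-hand side I call $p_4$.

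It remains to count the roots of $p_4$ in $(0,\sqrt{3})$. Descartes' rule applied to the sign pattern $(+,-,-,-,+)$ bounds the number of positive roots by two. I would then evaluate the endpoints, obtaining $p_4(0)=9\Ste>0$ and $p_4(\sqrt{3})=-\frac{6\sqrt{3}}{\Bi}-18<0$; the intermediate value theorem yields at least one root in $(0,\sqrt{3})$, while $p_4(+\infty)=+\infty$ yields at least one more in $(\sqrt{3},+\infty)$. These two roots saturate the Descartes bound, so each interval contains precisely one, which delivers the uniqueness claim. The pleasant feature here, in contrast with the (P1) analysis, is that the admissible interval is the elementary $(0,\sqrt{3})$ and the endpoint values are immediate, so no analogue of the auxiliary quantities $Q_1,\dots,Q_4$ is needed; the only place demanding mild care is the sign bookkeeping in Descartes' rule and at $z=\sqrt{3}$.
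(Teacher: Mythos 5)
Your proposal is correct and follows essentially the same route as the paper: derive $A_4$, $B_4$ from (\ref{EcCalorP4})--(\ref{CondConvP4}) exactly as for (P3), obtain $0<\xi_4<\sqrt{3}$ from positivity, reduce (\ref{PseudoStefan4}) to the quartic (\ref{Xi-4}), and combine Descartes' bound of two positive roots with the sign changes $p_4(0)=9\Ste>0$, $p_4(\sqrt{3})=-\tfrac{6\sqrt{3}}{\Bi}-18<0$, $p_4(+\infty)=+\infty$ to locate exactly one root in $(0,\sqrt{3})$. The only (harmless) difference is that you spell out the saturation of the Descartes bound slightly more explicitly than the paper does.
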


\section{Analysis of the approximate solutions when Bi$\rightarrow \infty$}

\par In  problem (P), a convective boundary condition (\ref{CondConvect}) characterized by the coefficient $h$ at the fixed face $x=0$ is imposed. This condition constitutes a generalization of the Dirichlet one in the sense that if we take the limit when $h\rightarrow\infty$ we must obtain $T(0,t)=-\Theta_{\infty}~.$ From definition (\ref{alfaSteBi}), studying the limit behaviour of the solution to our problem   when $h\rightarrow\infty$ is equivalent to study the case when Bi$\rightarrow\infty$. 

  In \cite{Ta2017} it was proved that the solution to  problem (P) when $h$ and  so Bi goes to infinity converges to the solution to the following problem:

\textbf{Problem (P$_{\infty}$)}. Find the temperature $T_{\infty}=T_{\infty}(x,t)$ at the solid region $0<x<s_{\infty}(t)$  and the location of the free boundary $x=s_{\infty}(t)$ such that:
\begin{align}
&\dfrac{\partial T_{\infty}}{\partial t} = \dfrac{k}{\rho c} \dfrac{\partial^2 T_{\infty}}{\partial x^2}~,\ 0<x<s_{\infty}(t)~, \ t>0~,\qquad \qquad \qquad \qquad \qquad  \\
& T_{\infty}(0,t)=-\Theta_{\infty}~,   t>0~,  \\
& T_{\infty}(s_{\infty}(t),t)= 0~,    t>0~, \\
&k\dfrac{\partial T_{\infty}}{\partial x}(s_{\infty}(t),t)=\rho \lambda \dot{s}_{\infty}(t)~,  t>0~,  \\
& s_{\infty}(0)=0~. 
\end{align}
whose exact solution, using the similarity technique, is given in \cite{AlSo1993}, \cite{Lu1991},\cite{Ta2011} by:
\begin{eqnarray}
T_{\infty}(x,t)&=&-\Theta_{\infty} +\frac{\Theta_{\infty}}{\text{erf}(\xi_{\infty})} \text{erf}\left( \dfrac{x}{2\sqrt{\alpha t}}\right)~, \label{TempPinf}\\
s_{\infty}(t)&=& 2 \xi_{\infty} \sqrt{\alpha t}~,\label{FrontPInf}
\end{eqnarray}
where $\xi_{\infty}$ is the unique positive solution of the following equation:
\begin{eqnarray}
z\exp{\left(z^{2}\right)}\text{erf}\left(z\right)-\dfrac{\text{Ste}}{\sqrt{\pi}}=0~, \qquad z>0~. \label{XiInf}
\end{eqnarray}

Notice that the solution $T(x,t),~s(t)$ to problem (P)  given by formulas (\ref{TempExacta}), (\ref{FrontExacta}), depend on the parameters $\xi,A$ and $B$  which in turns depend on the parameters Ste and Bi. Therefore, by  ``convergence" of the solution of (P) to the solution of (P$_{\infty}$) it is understood that:  for every Ste$,~x,~t>0$ fixed: $\xi\rightarrow \xi_{\infty}~$, $A\rightarrow A_{\infty}~$, $B\rightarrow B_{\infty}$ when Bi$\rightarrow \infty$. In this way it turns out that $T(x,t)\rightarrow T_{\infty}(x,t)$ and $s(t)\rightarrow s_{\infty}(t)$  is immediately verified when Bi$\rightarrow\infty~$.

Motivated by the previous ideas, we  devote this section in the analysis of the limit behaviour of the solution of each approximate problem (Pi), $i=1,2,3,4$ when Bi goes to infinity. We will prove that the solution of each  (Pi) converge to the solution of a new problem (Pi$_{\infty}$) that is defined from (Pi) $i=1,2,3,4$ by changing the convective boundary condition on $x=0$, by a Dirichlet condition, as follows:

\textbf{Problem (P${1_{\infty}}$)}. Find the temperature $T_{1\infty}=T_{1\infty}(x,t)$ at the solid region $0<x<s_{1\infty}(t)$  and the location of the free boundary  $x=s_{1\infty}(t)$ such that:
\begin{align}
&\dfrac{d}{dt}\int\limits_{0}^{s_{1 \infty}(t)} T_{1\infty}(x,t) dx=\dfrac{k}{\rho c}\left[\dfrac{\rho \lambda}{k} \dot{s}_{1\infty}(t)- \right.\qquad\qquad\qquad\qquad\qquad\qquad\nonumber\\ 
& \qquad\qquad\qquad\quad\qquad \left.+\dfrac{\partial T_{1\infty}}{\partial x}(0,t) \right], \ 0<x<s_{1\infty}(t), \ t>0~, \label{EcCalorP1Inf}\\
&T_{1\infty}(0,t)=-\Theta_{\infty}~,   t>0~,\\
&T_{1\infty}(s_{1\infty}(t),t)= 0~,    t>0~, \\
&\left(\dfrac{\partial T_{1 \infty}}{\partial x}\right)^2(s_{1\infty}(t),t)=-\frac{\lambda}{c}\dfrac{\partial^2 T_{1\infty}}{\partial x^2}(s_{1\infty}(t),t)~, \ t>0~,\\
&s_{1\infty}(0)=0~. 
\end{align}

The solution to  problem (P1$_{\infty}$) obtained in \cite{Go1958} is given by:
\begin{eqnarray}
T_{1\infty}(x,t)&=&-A_{1\infty}\Theta_{\infty}\left(1-\dfrac{x}{s_{1\infty}(t)}\right) -B_{1\infty}\Theta_{\infty} \left(1-\dfrac{x}{s_{1\infty}(t)}\right)^{2}, \label{TempP1Inf}\\
s_{1\infty}(t)&=&2 \xi_{1\infty} \sqrt{\alpha t},\label{FrontP1Inf}
\end{eqnarray}
where the constants $A_{1\infty}$ and $B_{1\infty}$ are given by:
\begin{eqnarray}
A_{1\infty}&=&\dfrac{6\text{Ste}-\left(6+2\text{Ste}\right)\xi_{1\infty}^{2}}{\text{Ste}\left(\xi_{1\infty}^{2}+3\right)}~,\label{A1inf}\\ 
B_{1\infty}&=&\dfrac{\left(3\text{Ste}+6\right)\xi_{1\infty}^{2}-3\text{Ste}}{\text{Ste}\left(\xi_{1\infty}^{2}+3\right)}~, \label{B1inf}
\end{eqnarray}

In order that $A_{1\infty}$ and $B_{1\infty}$ be positive we need that \mbox{$0<\xi_{\infty}^{\min}<\xi_{1\infty}<\xi_{\infty}^{\max}$} where:
\begin{equation}\label{Xi-InfMinMax}
\xi_{\infty}^{\min}=\sqrt{\dfrac{\Ste}{2+\Ste}} \qquad \text{ and }\qquad \xi_{\infty}^{\max}=\sqrt{\dfrac{3\Ste}{3+\Ste}}.
\end{equation}

Then  $\xi_{1\infty}$ must be a positive solution of the fourth degree polynomial equation:
\begin{eqnarray} 
\left(12+9\text{Ste}+2\text{Ste}^{2}\right)z^{4}-\left(42\text{Ste}+12\text{Ste}^{2}+18\right)z^{2}+ \nonumber \\
+9\text{Ste}(1+2\text{Ste})=0~, \,\, \xi_{\infty}^{\min}<z<\xi_{\infty}^{\max}. \label{Xi-1Inf}
\end{eqnarray}

Notice that if we define define by \mbox{$p_{1\infty}=p_{1\infty}(z)$} the l.h.s of  equation (\ref{Xi-1Inf}), we obtain:
\begin{align*}
p_{1\infty}(\xi_{\infty}^{\min})&=\dfrac{2\Ste^2 (2\Ste+3)^2}{(\Ste+2)^2}>0, \\
p_{1\infty}(\xi_{\infty}^{\max})&=\dfrac{-9\Ste(2\Ste+3)^2}{(\Ste+3)^2}<0. 
\end{align*}

Therefore due to the fact that (\ref{Xi-1Inf}) has exactly two positive solutions and $p_{1\infty}(+\infty)=+\infty$, we can assure that there is one and only one root of $p_{1\infty}$ in the interval $(\xi_{\infty}^{\min},\xi_{\infty}^{\max})$.
That means:
\begin{equation}\label{Xi-1Inf-valor2}
\xi_{1\infty}=\left(\frac{3(2\text{Ste}+1)(\text{Ste}+3)-(9+6\text{Ste})\sqrt{2\text{Ste}+1}}{12+9\text{Ste}+2\text{Ste}^2}\right)^{1/2}~.
\end{equation}

Notice that this value of $\xi_{1\infty}$ leads us to define $A_{1\infty}$ and $B_{1\infty}$ in an explicit form:

\begin{equation}\label{A1B1Inf}
A_{1\infty}=\dfrac{\sqrt{2\Ste+1}-1}{\Ste},  \qquad B_{1\infty}=1+\dfrac{1-\sqrt{2\Ste+1}}{\Ste}
\end{equation}

The above reasoning can be summarized in the following theorem:

\begin{teo} The solution to the problem \emph{(P1$_\infty$)}, for a quadratic profile in space, is unique and it is given by (\ref{TempP1Inf}) and (\ref{FrontP1Inf}) where the positive constants $A_{1\infty}$, $B_{1\infty}$ and $\xi_{1\infty}$ are defined explicitly from the data of the problem by the expressions (\ref{A1B1Inf}) and (\ref{Xi-1Inf-valor2}) respectively.

\end{teo}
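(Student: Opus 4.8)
The plan is to follow exactly the same strategy that established the theorem for problem (P1), now specialised to the Dirichlet limit. First I would insert the quadratic ansatz (\ref{TempP1Inf})--(\ref{FrontP1Inf}) into the three governing conditions of (P1$_\infty$). The free-boundary temperature condition and the initial condition $s_{1\infty}(0)=0$ hold automatically by the form of the ansatz. Evaluating the profile at $x=0$ turns the Dirichlet condition $T_{1\infty}(0,t)=-\Theta_\infty$ into the linear relation $A_{1\infty}+B_{1\infty}=1$, while substituting the ansatz into the integral condition (\ref{EcCalorP1Inf}) produces a second linear relation between $A_{1\infty}$, $B_{1\infty}$ and $\xi_{1\infty}$. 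Solving this $2\times 2$ linear system yields the expressions (\ref{A1inf})--(\ref{B1inf}); these are precisely the $\text{Bi}\to\infty$ limits of (\ref{A1}) and (\ref{B1}). Imposing $A_{1\infty}>0$ and $B_{1\infty}>0$ then confines $\xi_{1\infty}$ to the interval $(\xi_\infty^{\min},\xi_\infty^{\max})$ given in (\ref{Xi-InfMinMax}).

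Next I would impose the remaining pseudo-Stefan condition. Because $A_{1\infty}$ and $B_{1\infty}$ are already expressed through $\xi_{1\infty}$, this condition collapses to the single biquadratic equation (\ref{Xi-1Inf}) for $\xi_{1\infty}$. To localise its root I would evaluate $p_{1\infty}$ at the two endpoints, obtaining $p_{1\infty}(\xi_\infty^{\min})>0$ and $p_{1\infty}(\xi_\infty^{\max})<0$ exactly as recorded above, so a root lies strictly inside $(\xi_\infty^{\min},\xi_\infty^{\max})$ by the intermediate value theorem. Descartes' rule of signs applied to the sign pattern $(+,-,+)$ of (\ref{Xi-1Inf}) guarantees at most two positive roots, and since $p_{1\infty}(+\infty)=+\infty$ the second positive root must lie beyond $\xi_\infty^{\max}$; hence the root inside the interval is unique. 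This furnishes both existence and uniqueness of the quadratic solution.

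For the explicit form I would exploit that (\ref{Xi-1Inf}) is biquadratic: the substitution $u=z^2$ reduces it to a genuine quadratic $a u^2 - b u + c=0$ with $a=12+9\text{Ste}+2\text{Ste}^2$, $b=6(2\text{Ste}+1)(\text{Ste}+3)$ and $c=9\text{Ste}(2\text{Ste}+1)$. A short computation shows the discriminant factors cleanly as $b^2-4ac=36(2\text{Ste}+1)(2\text{Ste}+3)^2$, so its square root is $6(2\text{Ste}+3)\sqrt{2\text{Ste}+1}$. The quadratic formula then gives two candidate values of $u$; selecting the one with the minus sign (which is the root falling in $(\xi_\infty^{\min},\xi_\infty^{\max})$, as forced by the sign analysis above) and taking the positive square root yields exactly (\ref{Xi-1Inf-valor2}).

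Finally, substituting $\xi_{1\infty}^2$ from (\ref{Xi-1Inf-valor2}) back into (\ref{A1inf}) and (\ref{B1inf}) should collapse to the compact forms (\ref{A1B1Inf}). I expect this back-substitution to be the main obstacle: it is the one genuinely non-trivial algebraic step, since the rational expressions in $\xi_{1\infty}^2$ must be simplified using the fact that $\sqrt{2\text{Ste}+1}$ is the only surviving radical together with the constraint $A_{1\infty}+B_{1\infty}=1$. A useful check along the way is $\text{Ste}=4$, for which $\xi_{1\infty}^2=9/8$ and $A_{1\infty}=B_{1\infty}=\tfrac12$, consistent with both (\ref{A1inf}) and (\ref{A1B1Inf}).
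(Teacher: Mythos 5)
Your proposal is correct and follows essentially the same route as the paper: obtain $A_{1\infty},B_{1\infty}$ as functions of $\xi_{1\infty}$ from the Dirichlet and integral conditions, use their positivity to confine $\xi_{1\infty}$ to $(\xi_\infty^{\min},\xi_\infty^{\max})$, reduce the remaining pseudo-Stefan condition to the biquadratic (\ref{Xi-1Inf}), and combine the endpoint signs $p_{1\infty}(\xi_\infty^{\min})>0$, $p_{1\infty}(\xi_\infty^{\max})<0$ with the two-positive-root count and $p_{1\infty}(+\infty)=+\infty$ to isolate the unique root, which is then written explicitly. Your discriminant factorisation $b^2-4ac=36(2\mathrm{Ste}+1)(2\mathrm{Ste}+3)^2$ and the check at $\mathrm{Ste}=4$ are correct and merely make explicit the algebra the paper leaves implicit.
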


Once we obtain the solution to (P1$_{\infty}$) we can state the following convergence result:

\begin{teo}\label{Teo:ConvP1} The solution to problem \emph{(P1)} converges to the solution to  problem \emph{(P$1_{\infty}$)} when \emph{Bi}$ \rightarrow \infty$. In this case, by convergence will be understand that for a fixed $\emph{Ste}>0$, $T_1(x,t)\rightarrow T_{1\infty}(x,t)$ and $s_1(t)\rightarrow s_{1\infty}(t)$ when  $\emph{Bi}\to\infty$, for every $0<x<s_{1\infty}(t)$ and $t>0$ .
\end{teo}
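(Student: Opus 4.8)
The plan is to establish the convergence $T_1 \to T_{1\infty}$ and $s_1 \to s_{1\infty}$ by reducing everything to the convergence of the single coefficient $\xi_1 \to \xi_{1\infty}$ as $\Bi \to \infty$. Since $s_1(t)=2\xi_1\sqrt{\alpha t}$ and $s_{1\infty}(t)=2\xi_{1\infty}\sqrt{\alpha t}$, and since $T_1$ and $T_{1\infty}$ are explicit functions of $(\xi_1,A_1,B_1)$ and $(\xi_{1\infty},A_{1\infty},B_{1\infty})$ respectively through the common profile shape (\ref{Perfil}), it suffices to show the three numerical convergences $\xi_1\to\xi_{1\infty}$, $A_1\to A_{1\infty}$ and $B_1\to B_{1\infty}$; the convergence of the profiles and free boundaries then follows immediately by continuity of the defining formulas in these parameters.

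First I would analyse the defining polynomial equation (\ref{Xi-1}) for $\xi_1$. The key observation is that letting $\Bi\to\infty$ kills every term carrying a factor $\tfrac{1}{\Bi}$ or $\tfrac{1}{\Bi^2}$, so the quartic (\ref{Xi-1}) degenerates precisely into the quartic (\ref{Xi-1Inf}) defining $\xi_{1\infty}$. Likewise the bounding quantities $\xi^{\min},\xi^{\max}$ from (\ref{Xi-MinMax})--(\ref{Delta-MinMax}) converge to $\xi_\infty^{\min},\xi_\infty^{\max}$ from (\ref{Xi-InfMinMax}): taking $\Bi\to\infty$ in $\Delta^{\min},\Delta^{\max}$ gives $4\Ste^2+8\Ste$ and $12\Ste^2+36\Ste$, and a short computation confirms the limiting values $\sqrt{\Ste/(2+\Ste)}$ and $\sqrt{3\Ste/(3+\Ste)}$. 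So the whole problem (P1) degenerates coefficient-by-coefficient to problem (P1$_\infty$). I would then argue that $\xi_1$, being trapped in $(\xi^{\min},\xi^{\max})$, stays in a compact set bounded away from $0$ for large $\Bi$; any convergent subsequence $\xi_1\to\xi^*$ must satisfy the limiting quartic (\ref{Xi-1Inf}) and lie in $[\xi_\infty^{\min},\xi_\infty^{\max}]$, but by Theorem~4.4 that equation has a \emph{unique} root in $(\xi_\infty^{\min},\xi_\infty^{\max})$, namely $\xi_{1\infty}$; hence $\xi^*=\xi_{1\infty}$ and the full limit $\xi_1\to\xi_{1\infty}$ follows.

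With $\xi_1\to\xi_{1\infty}$ in hand, I would pass to the limit in the closed-form expressions (\ref{A1}) and (\ref{B1}) for $A_1$ and $B_1$. Again the $\tfrac{1}{\Bi}$ terms vanish, so $A_1\to \tfrac{6\Ste-(6+2\Ste)\xi_{1\infty}^2}{\Ste(\xi_{1\infty}^2+3)}=A_{1\infty}$ and $B_1\to B_{1\infty}$, matching (\ref{A1inf})--(\ref{B1inf}) exactly. Substituting the profile convergence into (\ref{TempP1}) versus (\ref{TempP1Inf}) and (\ref{FrontP1}) versus (\ref{FrontP1Inf}) yields the pointwise convergence of temperature and free boundary for each fixed $\Ste>0$, $0<x<s_{1\infty}(t)$ and $t>0$, which is the claim.

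The main obstacle is the uniqueness-of-limit argument for $\xi_1$: one must rule out the possibility that $\xi_1$ escapes the interval, converges to the \emph{other} positive root of the limiting quartic, or fails to converge at all. The cleanest way around this is to exploit that the sign analysis already performed in the proof of Theorem~4.1 (namely $p_1(\xi^{\min})>0$ and $p_1(\xi^{\max})<0$ for all $\Ste,\Bi>0$) passes to the limit, giving $p_{1\infty}(\xi_\infty^{\min})>0$ and $p_{1\infty}(\xi_\infty^{\max})<0$ as recorded just before Theorem~4.4; combined with the uniqueness of the root of $p_{1\infty}$ in $(\xi_\infty^{\min},\xi_\infty^{\max})$, this forces every subsequential limit of $\xi_1$ to equal $\xi_{1\infty}$. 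A continuity/compactness wrapper then upgrades subsequential convergence to full convergence, completing the proof.
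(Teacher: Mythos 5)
Your proposal follows essentially the same route as the paper's proof: pass to the limit in the defining quartic (\ref{Xi-1}) and in the interval $(\xi^{\min},\xi^{\max})$, use the uniqueness of the root of (\ref{Xi-1Inf}) in $(\xi_{\infty}^{\min},\xi_{\infty}^{\max})$ to conclude $\xi_1\to\xi_{1\infty}$, and then obtain $A_1\to A_{1\infty}$, $B_1\to B_{1\infty}$ directly from (\ref{A1})--(\ref{B1}). You are in fact slightly more careful than the paper, which simply asserts that ``the limit must be a solution'' of the limiting equation, whereas you justify the existence of the limit via boundedness and the subsequential-uniqueness argument.
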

\begin{proof}

From problem (P1), if we fixed  $\Ste>0$, we know that \hbox{$\xi_1=\xi_1(\text{Bi})$} is the unique solution to equation (\ref{Xi-1}) in the interval $(\xi^{\min},\xi^{\max})$. If we take the limit when Bi$\rightarrow \infty$ we obtain that $\lim\limits_{\text{Bi} \to \infty}{\xi_{1}(\text{Bi})}$ must be a solution of equation (\ref{Xi-1Inf}) in the interval $(\xi_{\infty}^{\min},\xi_{\infty}^{\max})$. As the latter equation has a unique solution given by $\xi_{1\infty}$ defined by (\ref{Xi-1Inf-valor2}) it results $\lim\limits_{\text{Bi} \to \infty}{\xi_{1}(\text{Bi})}=\xi_{1\infty}$. It follows immediately that $\lim\limits_{\text{Bi}\rightarrow\infty}s_1(t,\text{Bi})=s_{1\infty}(t)~$. For the convergence of $T_1(x,t,\text{Bi})$ we prove by simple computations  that $A_1(\text{Bi})\rightarrow A_{1\infty}$ and $B_1(\text{Bi})\rightarrow B_{1\infty},$ when $\Bi\to\infty$. 
\end{proof}

\textbf{Problem (P${2_{\infty}}$)}. Find the temperature $T_{2\infty}=T_{2\infty}(x,t)$ at the solid region $0<x<s_{2\infty}(t)$  and the location of the free boundary $x=s_{2\infty}(t)$ such that:
\begin{align}
&\dfrac{d}{dt} \int\limits_{0}^{s_{2\infty}(t)} T_{2\infty}(x,t) dx=\dfrac{k}{\rho c}\left[\dfrac{\rho \lambda}{k} \dot{s}_{2\infty}(t)-\right. \qquad \qquad\qquad\qquad\qquad\qquad\qquad\nonumber\\
& \qquad\qquad\qquad\qquad\quad+\left.\dfrac{\partial T_{2\infty}}{\partial x}(0,t) \right], \ 0<x<s_{2\infty}(t)~, \ t>0~, \\
&T_{2\infty}(0,t)=-\Theta_{\infty}~,   t>0~,  \\
&T_{2\infty}(s_{2\infty}(t),t)= 0~,    t>0~, \\
&k\dfrac{\partial T_{2\infty}}{\partial x}(s_{2\infty}(t),t)=\rho \lambda \dot{s}_{2\infty}(t)~, \ t>0~,  \\
&s_{2\infty}(0)=0~.
\end{align}

The solution to problem (P2$_{\infty}$), obtained in \cite{Wo2001}, is given by:
\begin{eqnarray}
T_{2\infty}(x,t)&=&-A_{2\infty}\Theta_{\infty}\left(1-\dfrac{x}{s_{2\infty}(t)}\right) -B_{2\infty}\Theta_{\infty} \left(1-\dfrac{x}{s_{2\infty}(t)}\right)^{2}~, \label{TempP2Inf}\\
s_{2\infty}(t)&=&2 \xi_{2\infty} \sqrt{\alpha t}~,\label{FrontP2Inf}
\end{eqnarray}
where the constants $A_{2\infty}$ and $B_{2\infty}$ are given by:
\begin{eqnarray}
A_{2\infty}&=&\dfrac{6\text{Ste}-\left(6+2\text{Ste}\right)\xi_{2\infty}^{2}}{\text{Ste}\left(\xi_{2\infty}^{2}+3\right)}~,\\
B_{2\infty}&=&\dfrac{\left(3\text{Ste}+6\right)\xi_{2\infty}^{2}-3\text{Ste}}{\text{Ste}\left(\xi_{2\infty}^{2}+3\right)}~,
\end{eqnarray}

In order that $A_{2\infty}$ and $B_{2\infty}$ be positive, as in the problem (P1), we need that $0<\xi_{\infty}^{\min}<\xi_{2\infty}<\xi_{\infty}^{\max}$ where $\xi_{\infty}^{\min}$ and $\xi_{\infty}^{\max}$ are given in formulas (\ref{Xi-InfMinMax}).
Consequently $\xi_{2\infty}$ must be a positive solution of the fourth degree polynomial equation:
\begin{eqnarray} 
z^{4}+\left(6+\text{Ste}\right)z^{2}-3\text{Ste}=0~, \qquad ~\xi_{\infty}^{\min}<z<\xi_{\infty}^{\max}, \label{Xi-2Inf}
\end{eqnarray}

Notice first that the polynomial function \mbox{$p_{2\infty}=p_{2\infty}(z)$}  defined by the l.h.s of  equation (\ref{Xi-2Inf}), has a unique root in $\mathbb{R}^{+}$.
In addition as we have
\begin{align*}
p_{2\infty}(\xi_{\infty}^{\min})&=\dfrac{-\Ste^2(2\Ste+3)}{(\Ste+2)^2}<0, \\
p_{2\infty}(\xi_{\infty}^{\max})&=\dfrac{9\Ste(2\Ste+3)^2}{(\Ste+3)^2}>0,
\end{align*}
we can assure that the unique positive root of $p_{2\infty}$ is located on the interval $(\xi_{\infty}^{\min},\xi_{\infty}^{\max})$ and it can be explicitly obtained by the expression:
\begin{equation}\label{Xi-2Inf-valor2}
\xi_{2\infty}=\left(\dfrac{\sqrt{(6+\Ste)^2+12\Ste}-(6+\Ste)}{2}\right)^{1/2}~.
\end{equation}

Notice that the value of $\xi_{2\infty}$ leads us to define $A_{2\infty}$ and $B_{2\infty}$ in an explicit form as well: 
\begin{eqnarray}
&&A_{2\infty}=- \frac{2\, \left(\mathrm{Ste} + 3\right)}{\mathrm{Ste}} +\frac{12\, \left(2\, \mathrm{Ste} + 3\right)}{\mathrm{Ste}\, \left(  \sqrt{\Ste^2+24\Ste+36}-\mathrm{Ste}\right)},\label{A2Inf}\\
&&B_{2\infty}= \frac{3(\, \mathrm{Ste} + 2)}{\mathrm{Ste}} -\frac{12(2\Ste+3)}{\mathrm{Ste}\, \left( \sqrt{\Ste^2+24\Ste+36}-\mathrm{Ste}\right)}.\label{B2Inf}
\end{eqnarray}

The above reasoning can be summarized in the following theorem:

\begin{teo} The solution to the problem \emph{(P2$_\infty$)}, for a quadratic profile in space, is unique and it is given by (\ref{TempP2Inf}) and (\ref{FrontP2Inf}) where the positive constants $A_{2\infty}$, $B_{2\infty}$ and $\xi_{2\infty}$ are defined explicitly from the data of the problem by the expressions (\ref{A2Inf}), (\ref{B2Inf}) and (\ref{Xi-2Inf-valor2}) respectively.

\end{teo}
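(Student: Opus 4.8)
The plan is to follow exactly the template already used for Problems (P1)--(P4) and (P1$_\infty$): insert the quadratic ansatz, reduce the boundary and integral conditions to algebraic relations among $A_{2\infty}$, $B_{2\infty}$ and $\xi_{2\infty}$, and then isolate the admissible root of the resulting polynomial. First I would substitute the profile (\ref{TempP2Inf}) together with the free boundary (\ref{FrontP2Inf}) into the five conditions defining (P2$_\infty$). By construction the boundary value $T_{2\infty}(s_{2\infty}(t),t)=0$ and the initial condition $s_{2\infty}(0)=0$ hold identically, so only three conditions remain effective. Evaluating the profile at $x=0$, the Dirichlet condition $T_{2\infty}(0,t)=-\Theta_{\infty}$ collapses to the single relation $A_{2\infty}+B_{2\infty}=1$, while the integral heat balance condition supplies a second linear equation in $A_{2\infty}$ and $B_{2\infty}$. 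Solving this $2\times2$ linear system reproduces the displayed rational expressions for $A_{2\infty}$ and $B_{2\infty}$ in terms of $\xi_{2\infty}$ and $\Ste$.

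Next I would impose positivity. Since the profile must mimic the sign, monotonicity and convexity of the exact temperature, both $A_{2\infty}$ and $B_{2\infty}$ must be positive; reading off the signs of the two numerators yields precisely the window $\xi_{\infty}^{\min}<\xi_{2\infty}<\xi_{\infty}^{\max}$ with the bounds (\ref{Xi-InfMinMax}). The remaining Stefan condition then closes the system and forces $\xi_{2\infty}$ to satisfy the biquadratic equation (\ref{Xi-2Inf}). I would analyse $p_{2\infty}$ through the substitution $w=z^2$, reducing it to $w^2+(6+\Ste)w-3\Ste=0$; since the product of its two $w$-roots equals $-3\Ste<0$, exactly one $w$ is positive, giving a single positive root $z$ in $\mathbb{R}^+$ (equivalently, $p_{2\infty}'(z)=2z(2z^2+6+\Ste)>0$, so $p_{2\infty}$ is strictly increasing on $\mathbb{R}^+$). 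To guarantee this root lands inside the admissible window, I would evaluate $p_{2\infty}$ at the two endpoints and verify $p_{2\infty}(\xi_{\infty}^{\min})<0<p_{2\infty}(\xi_{\infty}^{\max})$, each evaluation factoring into a manifestly-signed rational expression in $\Ste$; monotonicity then places the unique positive root strictly between the endpoints, which simultaneously delivers existence, uniqueness and admissibility of $\xi_{2\infty}$.

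Finally, solving the quadratic in $w$ explicitly and keeping the positive branch produces the closed form (\ref{Xi-2Inf-valor2}); back-substituting $\xi_{2\infty}^2$ into the rational expressions for $A_{2\infty}$ and $B_{2\infty}$ gives the explicit formulas (\ref{A2Inf}) and (\ref{B2Inf}), which completes the statement. Every computation here is elementary, so the only point requiring genuine care is the endpoint sign check: one must confirm that the two factored identities for $p_{2\infty}(\xi_{\infty}^{\min})$ and $p_{2\infty}(\xi_{\infty}^{\max})$ carry the claimed signs for every $\Ste>0$. I expect this to be the sole delicate bookkeeping, and it is markedly lighter than in Problem (P1): the $1/\Bi$ terms are absent and $p_{2\infty}$ is globally monotone, so Descartes' rule and the two-root localization argument needed in the convective case are not required, and the root count follows directly from the quadratic in $w$.
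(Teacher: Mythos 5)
Your proposal follows essentially the same route as the paper: reduce the Dirichlet and heat-balance conditions to a linear system determining $A_{2\infty}$ and $B_{2\infty}$ in terms of $\xi_{2\infty}$, use positivity to pin down the window $(\xi_{\infty}^{\min},\xi_{\infty}^{\max})$, and then locate the unique positive root of the biquadratic $z^4+(6+\Ste)z^2-3\Ste=0$ by checking $p_{2\infty}(\xi_{\infty}^{\min})<0<p_{2\infty}(\xi_{\infty}^{\max})$ together with monotonicity on $\mathbb{R}^{+}$, finally solving the quadratic in $z^2$ for the closed form. The endpoint sign computations you flag as the only delicate step are exactly the ones the paper carries out, so the argument is correct and equivalent.
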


The fact of having the exact solution to problem (P2$_{\infty}$) allows us to proof the following result:
\begin{teo}\label{Teo:ConvP2} The solution to problem \emph{(P2)} converges to the solution to  problem \emph{(P$2_{\infty}$)} when \emph{Bi}$ \rightarrow \infty$. 
\end{teo}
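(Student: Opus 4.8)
The plan is to mirror the argument already used for Theorem~\ref{Teo:ConvP1}, exploiting the fact that the coefficients of the defining polynomial equation (\ref{Xi-2}) depend continuously on $1/\Bi$ and degenerate, as $\Bi\to\infty$, precisely into those of equation (\ref{Xi-2Inf}). First I would fix $\Ste>0$ and recall that for every $\Bi>0$ the coefficient $\xi_2=\xi_2(\Bi)$ is the unique root of $p_2$ in $(\xi^{\min},\xi^{\max})$, with the endpoints given by (\ref{Xi-MinMax})--(\ref{Delta-MinMax}). A short computation letting $1/\Bi\to0$ (and $1/\Bi^2\to0$ inside $\Delta^{\min},\Delta^{\max}$) shows that $\xi^{\min}\to\xi_\infty^{\min}$ and $\xi^{\max}\to\xi_\infty^{\max}$, where $\xi_\infty^{\min},\xi_\infty^{\max}$ are as in (\ref{Xi-InfMinMax}). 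Consequently the whole family $\{\xi_2(\Bi)\}$ is contained, for $\Bi$ large, in a fixed compact subinterval of $(0,+\infty)$.

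The core step is to upgrade this boundedness to genuine convergence. I would argue by compactness: let $\ell$ be any accumulation point of $\xi_2(\Bi)$ as $\Bi\to\infty$. Passing to the limit in (\ref{Xi-2}), the two terms carrying the factor $1/\Bi$ vanish while the remaining coefficients stay constant, so $\ell$ satisfies the limiting equation (\ref{Xi-2Inf}), that is $p_{2\infty}(\ell)=0$, and $\ell\in[\xi_\infty^{\min},\xi_\infty^{\max}]$. Since the excerpt already records $p_{2\infty}(\xi_\infty^{\min})<0$ and $p_{2\infty}(\xi_\infty^{\max})>0$, the endpoints are not roots, hence $\ell\in(\xi_\infty^{\min},\xi_\infty^{\max})$; and as $p_{2\infty}$ has a unique root $\xi_{2\infty}$ there, given explicitly by (\ref{Xi-2Inf-valor2}), every accumulation point must coincide with $\xi_{2\infty}$. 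A bounded family with a single accumulation point converges, so $\lim_{\Bi\to\infty}\xi_2(\Bi)=\xi_{2\infty}$.

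From here the two asserted limits follow routinely. By (\ref{FrontP2}) and (\ref{FrontP2Inf}), $s_2(t,\Bi)=2\xi_2(\Bi)\sqrt{\alpha t}\to 2\xi_{2\infty}\sqrt{\alpha t}=s_{2\infty}(t)$ for each $t>0$. For the temperature I would substitute $\xi_2(\Bi)\to\xi_{2\infty}$ into the expressions (\ref{A2})--(\ref{B2}); as $1/\Bi\to0$ these collapse exactly to the formulas defining $A_{2\infty},B_{2\infty}$ (cf. (\ref{A2Inf})--(\ref{B2Inf})), so $A_2(\Bi)\to A_{2\infty}$ and $B_2(\Bi)\to B_{2\infty}$. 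Since the profile (\ref{TempP2}) is a fixed polynomial in $x$ whose coefficients depend continuously on $A_2,B_2,\xi_2$, it follows that $T_2(x,t,\Bi)\to T_{2\infty}(x,t)$ pointwise for every $0<x<s_{2\infty}(t)$ and $t>0$, as claimed.

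I expect the only genuine subtlety to be the continuity-of-roots argument of the second paragraph: one must exclude escape of $\xi_2(\Bi)$ toward an endpoint of the shrinking interval $(\xi^{\min},\xi^{\max})$ and must prevent distinct subsequential limits, both of which are resolved by the strict signs of $p_{2\infty}$ at $\xi_\infty^{\min},\xi_\infty^{\max}$ together with the uniqueness of the root of $p_{2\infty}$ in $(\xi_\infty^{\min},\xi_\infty^{\max})$. Everything else reduces to elementary limit evaluations.
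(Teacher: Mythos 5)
Your proposal is correct and follows essentially the same route as the paper: the paper disposes of this theorem by declaring it analogous to Theorem~\ref{Teo:ConvP1}, whose proof likewise passes to the limit in the polynomial equation (\ref{Xi-2}) and invokes the uniqueness of the root of (\ref{Xi-2Inf}) in $(\xi_{\infty}^{\min},\xi_{\infty}^{\max})$, then checks $A_2\to A_{2\infty}$, $B_2\to B_{2\infty}$ by direct computation. Your compactness/accumulation-point argument merely supplies the rigor the paper leaves implicit when it assumes $\lim_{\Bi\to\infty}\xi_2(\Bi)$ exists, so it is a welcome tightening rather than a different method.
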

\begin{proof}
It is analogous to the proof given in Theorem \ref{Teo:ConvP1}.
\end{proof}

\textbf{Problem (P${3_{\infty}}$)}. Find the temperature $T_{3\infty}=T_{3\infty}(x,t)$ at the solid region $0<x<s_{3\infty}(t)$  and the location of the free boundary  $x=s_{3\infty}(t)$ such that:
\begin{align} 
&\int\limits_0^{s_{3\infty}(t)} \int\limits_0^x \frac{\partial T_{3\infty}}{\partial t}(\xi,t) d\xi dx = -\dfrac{k}{\rho c} \left[T_{3\infty}(0,t)+\right.  \qquad\qquad\qquad\qquad\nonumber\\
&\qquad\qquad\qquad\qquad\quad+\left.\frac{\partial T_{3\infty}}{\partial x}(0,t)s_{3\infty}(t) \right]~, \ 0<x<s_{3\infty}(t)~, \  t>0~,  \\
&T_{3\infty}(0,t)=-\Theta_{\infty}~, \ t>0~, \\
&T_{3\infty}(s_{3\infty}(t),t)= 0~, \ t>0~, \\
&k\dfrac{\partial T_{3\infty}}{\partial x}(s_{3\infty}(t),t)=\rho \lambda \dot{s}_{3\infty}(t)~, \ t>0~, \\
& s_{3\infty}(0)=0~. 
\end{align}

The solution to problem (P3$_{\infty}$) is given by:
\begin{eqnarray}
T_{3\infty}(x,t)&=& -A_{3\infty}\Theta_{\infty}\left(1-\dfrac{x}{s_{3\infty}(t)}\right) -B_{3\infty}\Theta_{\infty} \left(1-\dfrac{x}{s_{3\infty}(t)}\right)^{2}~, \label{TempP3Inf}\\
s_{3\infty}(t)&=&2 \xi_{3\infty} \sqrt{\alpha t}~,\label{FrontP3Inf}
\end{eqnarray}
where the constants $A_{3\infty}$ and $B_{3\infty}$ are:
\begin{eqnarray}
A_{3\infty}&=&1-\frac{\xi_{3\infty}^2}{3}~,\\
B_{3\infty}&=&\frac{1}{3}\xi_{3\infty}^{2}~,
\end{eqnarray}

As we impose that $A_{3\infty}$ and $B_{3\infty}$ must be positive, it is easily obtained that $\xi_{3\infty}$ must belong to the interval $(0,\sqrt{3})$. Therefore $\xi_{3\infty}$ is a positive solution of the second degree polynomial equation:
\begin{eqnarray} 
\left(6+\text{Ste}\right)z^{2}-3\text{Ste}=0~, \qquad 0<z<\sqrt{3}, \label{Xi-3Inf}
\end{eqnarray}

It it clear that the polynomial function $p_{3\infty}=p_{3\infty}(z)$ defined by the l.h.s of (\ref{Xi-3Inf}) has a unique root in $\mathbb{R}^{+}$.
As we need that this root be in a required range, we study the sign of $p_{3\infty}$ in the extremes of the interval:
\begin{eqnarray*}
p_{3\infty}(0)&=& -3\text{Ste}<0,\\
p_{3\infty}\left(\sqrt{3} \right)&=& 18>0.
\end{eqnarray*}

Then,  we can assure that the unique positive root of $p_{3\infty}$ belongs to the interval $(0,\sqrt{3})$ and it is given by:
\begin{equation}
\xi_{3\infty}=\sqrt{\frac{3\text{Ste}}{6+\text{Ste}}}. \label{Xi-3Inf-valor}
\end{equation}

Hence, $A_{3\infty}$ and $B_{3\infty}$ become explicitly defined by:
\begin{equation}
A_{3\infty}=\dfrac{6}{6+\Ste}, \qquad \qquad B_{3\infty}=\dfrac{\Ste}{6+\Ste}.\label{A3B3Inf}
\end{equation}

The following theorem summarizes the previous reasoning:

\begin{teo} The solution to the problem \emph{(P3$_\infty$)}, for a quadratic profile in space, is unique and it is given by (\ref{TempP3Inf}) and (\ref{FrontP3Inf}) where the positive constants $A_{3\infty}$, $B_{3\infty}$ and $\xi_{3\infty}$ are defined explicitly from the data of the problem by the expressions (\ref{A3B3Inf}) and (\ref{Xi-3Inf-valor}) respectively.

\end{teo}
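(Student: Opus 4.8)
The plan is to pin down the three unknown constants $A_{3\infty}$, $B_{3\infty}$ and $\xi_{3\infty}$ from the three active conditions of problem (P3$_\infty$): the Dirichlet condition at $x=0$, the double-integral condition, and the Stefan condition at the free boundary. The quadratic ansatz (\ref{TempP3Inf})--(\ref{FrontP3Inf}) is tailored so that the vanishing of the temperature at $x=s_{3\infty}(t)$ and the initial condition $s_{3\infty}(0)=0$ hold identically; no information is lost by setting them aside, exactly as in the bookkeeping already used for (P1), (P2) and (P3).

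First I would impose the Dirichlet condition. Evaluating (\ref{TempP3Inf}) at $x=0$ gives $T_{3\infty}(0,t)=-\Theta_\infty(A_{3\infty}+B_{3\infty})$, so the requirement $T_{3\infty}(0,t)=-\Theta_\infty$ forces
\begin{equation*}
A_{3\infty}+B_{3\infty}=1.
\end{equation*}
Next I would insert the profile into the double-integral condition. Owing to the self-similar form $s_{3\infty}(t)=2\xi_{3\infty}\sqrt{\alpha t}$, every time-dependent factor separates out and the identity reduces to a single algebraic relation between $B_{3\infty}$ and $\xi_{3\infty}$, namely $B_{3\infty}=\tfrac13\xi_{3\infty}^2$; combined with the previous relation this yields $A_{3\infty}=1-\tfrac13\xi_{3\infty}^2$. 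Both expressions are positive precisely when $0<\xi_{3\infty}<\sqrt3$, which is the admissibility window recorded just before the statement.

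Finally I would substitute these into the Stefan condition, the only remaining equation, where $\partial T_{3\infty}/\partial x$ evaluated at $x=s_{3\infty}$ equals $\Theta_\infty A_{3\infty}/s_{3\infty}$. After cancelling the common $t$-dependence, this collapses to the second-degree equation $(6+\Ste)z^2-3\Ste=0$ on $(0,\sqrt3)$. Here the argument is genuinely easy: $p_{3\infty}$ is an upward parabola with a single positive zero, and the sign check $p_{3\infty}(0)=-3\Ste<0$, $p_{3\infty}(\sqrt3)=18>0$ confines that zero to $(0,\sqrt3)$, so the root is unique and admissible and is given in closed form by (\ref{Xi-3Inf-valor}); back-substitution then produces the explicit constants (\ref{A3B3Inf}). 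I expect no real obstacle in this case — unlike (P1) and (P4), the Stefan relation here is only quadratic, so neither Descartes' rule nor the delicate endpoint analysis at $\xi^{\min},\xi^{\max}$ is needed. The only point that deserves care is verifying that the unique positive root falls inside $(0,\sqrt3)$, which is exactly what guarantees the positivity of $A_{3\infty}$ and $B_{3\infty}$.
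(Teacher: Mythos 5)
Your proposal is correct and follows essentially the same route as the paper: express $A_{3\infty}$ and $B_{3\infty}$ in terms of $\xi_{3\infty}$ from the Dirichlet and double-integral conditions (so that positivity forces $0<\xi_{3\infty}<\sqrt{3}$), reduce the Stefan condition to $(6+\Ste)z^{2}-3\Ste=0$, and use the sign checks $p_{3\infty}(0)=-3\Ste<0$, $p_{3\infty}(\sqrt{3})=18>0$ to locate the unique positive root in $(0,\sqrt{3})$, which gives the closed forms (\ref{Xi-3Inf-valor}) and (\ref{A3B3Inf}). The only cosmetic remark is that the double-integral condition by itself yields $B_{3\infty}=\tfrac{1}{3}\xi_{3\infty}^{2}(A_{3\infty}+B_{3\infty})$, which becomes $B_{3\infty}=\tfrac13\xi_{3\infty}^{2}$ only after combining with $A_{3\infty}+B_{3\infty}=1$; this does not affect the argument.
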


\begin{teo} \label{teor 4.6} The solution to  problem \emph{(P3)} converges to the solution to problem \emph{(P$3_{\infty}$)} when $\emph{Bi}\rightarrow \infty$. 
\end{teo}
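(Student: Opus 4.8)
The plan is to follow verbatim the scheme used in the proof of Theorem \ref{Teo:ConvP1}, exploiting the fact that here the admissible interval for the similarity coefficient is the \emph{fixed} set $(0,\sqrt{3})$, independent of $\Bi$, which makes the argument slightly cleaner than for problems (P1) and (P2). Fix $\Ste>0$. By the theorem solving problem (P3), $\xi_3=\xi_3(\Bi)$ is the unique root of the cubic equation (\ref{Xi-3}) lying in $(0,\sqrt{3})$. First I would record that the family $\{\xi_3(\Bi)\}_{\Bi>0}$ is uniformly bounded, being contained in the compact set $[0,\sqrt{3}]$, so that every sequence $\Bi_n\to\infty$ admits a convergent subsequence of $\xi_3$-values.

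Next I would pass to the limit in (\ref{Xi-3}). Writing $p_3(z)=\frac{1}{\Bi}z^3+(6+\Ste)z^2+\frac{3}{\Bi}z-3\Ste$, the coefficients depend continuously on $\tfrac{1}{\Bi}$, and as $\Bi\to\infty$ the terms $\frac{1}{\Bi}z^3$ and $\frac{3}{\Bi}z$ tend to zero uniformly on $[0,\sqrt{3}]$; hence $p_3$ converges to $p_{3\infty}(z)=(6+\Ste)z^2-3\Ste$, the left-hand side of (\ref{Xi-3Inf}). Consequently any limit point of $\xi_3(\Bi)$ must be a root of $p_{3\infty}$ in $[0,\sqrt{3}]$. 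Since the theorem solving problem (P3$_\infty$) guarantees that $p_{3\infty}$ has the unique root $\xi_{3\infty}$ in $(0,\sqrt{3})$ given explicitly by (\ref{Xi-3Inf-valor}), all subsequential limits coincide with $\xi_{3\infty}$, and together with the boundedness this yields $\lim_{\Bi\to\infty}\xi_3(\Bi)=\xi_{3\infty}$.

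The convergence of the free boundary is then immediate: from (\ref{FrontP3}) and (\ref{FrontP3Inf}) one has $s_3(t,\Bi)=2\xi_3(\Bi)\sqrt{\alpha t}\to 2\xi_{3\infty}\sqrt{\alpha t}=s_{3\infty}(t)$ for every $t>0$. For the temperature I would take limits in the explicit expressions (\ref{A3}) and (\ref{B3}): as $\Bi\to\infty$ the denominator $\frac{1}{\Bi}\xi_3^2+6\xi_3+\frac{3}{\Bi}$ tends to $6\xi_{3\infty}$, whence a direct computation gives $A_3(\Bi)\to 1-\frac{\xi_{3\infty}^2}{3}=A_{3\infty}$ and $B_3(\Bi)\to\frac{\xi_{3\infty}^2}{3}=B_{3\infty}$, in agreement with (\ref{A3B3Inf}). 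Since the profile (\ref{TempP3}) depends continuously on $(A_3,B_3,\xi_3)$, this forces $T_3(x,t,\Bi)\to T_{3\infty}(x,t)$ pointwise on $0<x<s_{3\infty}(t)$, $t>0$, which is the asserted convergence.

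The only genuinely delicate point is the limit-passage in the second paragraph: one may not simply assert that $\xi_3(\Bi)$ converges, but must deduce it from the boundedness of the family together with the uniqueness of the root of the limiting equation $p_{3\infty}$, exactly as in Theorem \ref{Teo:ConvP1}. Once that compactness-plus-uniqueness step is in place, everything else reduces to continuity of the coefficients in $\tfrac{1}{\Bi}$ and elementary algebra.
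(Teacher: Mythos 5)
Your proof is correct and follows essentially the same route as the paper, which simply declares the argument analogous to that of Theorem \ref{Teo:ConvP1}: pass to the limit in the polynomial equation (\ref{Xi-3}), invoke the uniqueness of the root $\xi_{3\infty}$ of (\ref{Xi-3Inf}) in $(0,\sqrt{3})$, and then conclude for $s_3$, $A_3$, $B_3$ and $T_3$ by continuity. Your compactness-plus-uniqueness step is a welcome tightening, since the paper's argument tacitly assumes that the limit of the similarity coefficient exists before identifying it.
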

\begin{proof}
It is analogous to the proof of Theorem \ref{Teo:ConvP1}.
\end{proof}

\textbf{Problem (P${4_{\infty}}$)}. Find the temperature $T_{4\infty}=T_{4\infty}(x,t)$ at the solid region $0<x<s_{4\infty}(t)$  and the location of the free boundary  $x=s_{4\infty}(t)$ such that:
\begin{align} 
&\int\limits_0^{s_{4\infty}(t)} \int\limits_0^x \frac{\partial T_{4\infty}}{\partial t}(\xi,t) d\xi dx = -\dfrac{k}{\rho c} \left[T_{4\infty}(0,t)+\right.  \quad\qquad\qquad \qquad \\
&\left.\qquad \qquad\qquad\qquad\quad+\frac{\partial T_{4\infty}}{\partial x}(0,t)s_{4\infty}(t) \right]~, \ 0<x<s_{4\infty}(t)~, \ t>0~,  \\
&T_{4\infty}(0,t)=-\Theta_{\infty}~, \ t>0~,\\
&T_{4\infty}(s_{4\infty}(t),t)= 0~, \ t>0~,\\
&\left(\frac{\partial T_{4\infty}}{\partial x}\right)^2(s_{4\infty}(t),t)=-\frac{\lambda}{c}\frac{\partial ^2T_{4\infty}}{\partial x^2}(s_{4\infty}(t),t)~, \ t>0~, \\
&s_{4\infty}(0)=0. 
\end{align} 

The solution to problem (P4$_{\infty}$) is given by:
\begin{eqnarray}
T_{4\infty}(x,t)&=&-A_{4\infty}\Theta_{\infty}\left(1-\dfrac{x}{s_{4\infty}(t)}\right) -B_{4\infty}\Theta_{\infty} \left(1-\dfrac{x}{s_{4\infty}(t)}\right)^{2} \label{TempP4Inf}\\
s_{4\infty}(t)&=&2 \xi_{4\infty} \sqrt{\alpha t}~,\label{FrontP4Inf}
\end{eqnarray}
where the constants $A_{4\infty}$ and $B_{4\infty}$ are given by:
\begin{eqnarray}
A_{4\infty}&=&1-\frac{\xi_{4\infty}^{2}}{3}~,\\
B_{4\infty}&=&\frac{1}{3}\xi_{4\infty}^{2}>0.
\end{eqnarray}

As in problem (P3$_{\infty}$), the coefficients $A_{4\infty}$ and $B_{4\infty}$ must be positive. Therefore it turns out that $\xi_{4\infty}$ must belong to the interval $(0,\sqrt{3})$.
From this fact we have that $\xi_{4\infty}$ is a positive solution of the fourth degree polynomial equation:
\begin{eqnarray} 
\text{Ste} \ z^{4}-6\left(1+\text{Ste}\right)z^{2}+9\text{Ste}=0~,\quad 0<z<\sqrt{3}~, \label{Xi-4Inf}
\end{eqnarray}

Let us call by $p_{4\infty}=p_{4\infty}(z)$ the l.h.s of equation (\ref{Xi-4Inf}). Notice first that $p_{4\infty}$ can have at most two roots in $\mathbb{R}^+$. In addition:
\begin{eqnarray*}
p_{4\infty}(0)=9\Ste>0~, \qquad  p_{4\infty}(\sqrt{3})=-18<0~, \qquad p_{4\infty}(+\infty)=+\infty.
\end{eqnarray*}

Consequently  it becomes that $p_{4\infty}$ has a unique solution on the interval $(0,\sqrt{3})$ which can be explicitly defined by:
\begin{equation}
\xi_{4\infty}=\left(\dfrac{3\left(\left(1+\text{Ste}\right)-\sqrt{2\text{Ste}+1}\right)}{\text{Ste}}\right)^{1/2}~. \label{Xi-4Inf-valor2}
\end{equation}

Thus we get:
\begin{equation}
A_{4\infty}=\dfrac{\sqrt{2\Ste+1}-1}{\Ste}~,\qquad  B_{4\infty}=\dfrac{1+\Ste-\sqrt{2\Ste+1}}{\text{Ste}},\label{A4B4Inf}
\end{equation}
and therefore we have the following theorem:

\begin{teo} The solution to the problem \emph{(P4$_\infty$)}, for a quadratic profile in space, is unique and it is given by (\ref{TempP4Inf}) and (\ref{FrontP4Inf}) where the positive constants $A_{4\infty}$, $B_{4\infty}$ and $\xi_{4\infty}$ are defined explicitly from the data of the problem by the expressions (\ref{A4B4Inf}) and (\ref{Xi-4Inf-valor2}) respectively.
\end{teo}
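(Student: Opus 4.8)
The plan is to follow the same pattern established for Problems (P1)--(P3): substitute the quadratic ansatz, reduce the conditions of (P4$_\infty$) to algebraic relations among $A_{4\infty}$, $B_{4\infty}$ and $\xi_{4\infty}$, and then solve the resulting polynomial equation explicitly. First I would observe that, by the very form of (\ref{TempP4Inf})--(\ref{FrontP4Inf}), the conditions $T_{4\infty}(s_{4\infty}(t),t)=0$ and $s_{4\infty}(0)=0$ are satisfied automatically, so only three substantive conditions remain: the Dirichlet condition at $x=0$, the double-integral heat-balance relation, and the pseudo-Stefan condition at the free boundary.

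Next I would use the Dirichlet condition $T_{4\infty}(0,t)=-\Theta_\infty$ together with the double-integral condition to determine $A_{4\infty}$ and $B_{4\infty}$ as functions of $\xi_{4\infty}$. Evaluating the profile at $x=0$ gives $A_{4\infty}+B_{4\infty}=1$, while the integral relation supplies the second linear equation; since these two conditions coincide with those already solved for Problem (P3$_\infty$), they yield $A_{4\infty}=1-\tfrac{1}{3}\xi_{4\infty}^2$ and $B_{4\infty}=\tfrac{1}{3}\xi_{4\infty}^2$. Imposing $A_{4\infty}>0$ and $B_{4\infty}>0$ then confines $\xi_{4\infty}$ to the interval $(0,\sqrt{3})$, as asserted before the theorem.

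The core step is to insert these expressions into the pseudo-Stefan condition at $x=s_{4\infty}(t)$. A short computation, using $\Ste=c\Theta_\infty/\lambda$ to clear the constants, collapses that condition to $\Ste\,A_{4\infty}^2=2B_{4\infty}$, which after substituting the formulas for $A_{4\infty}$ and $B_{4\infty}$ becomes precisely the biquadratic equation (\ref{Xi-4Inf}). I would then solve it by the substitution $w=\xi_{4\infty}^2$, reducing it to the quadratic $\Ste\,w^2-6(1+\Ste)w+9\Ste=0$, whose roots are
\[
w_\pm=\frac{3\bigl((1+\Ste)\pm\sqrt{2\Ste+1}\bigr)}{\Ste}.
\]
Their product equals $9$ and their sum is positive, so both $w_\pm$ are real and positive; verifying that $w_-<3<w_+$ (equivalently $\sqrt{2\Ste+1}>1$) shows that exactly one root, $w_-$, lies in $(0,3)$. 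Taking the positive square root gives the explicit value (\ref{Xi-4Inf-valor2}), and back-substitution into the formulas for $A_{4\infty}$, $B_{4\infty}$ produces the closed forms (\ref{A4B4Inf}). This biquadratic reduction also delivers uniqueness in $(0,\sqrt{3})$ directly; alternatively one can argue as in the text via Descartes' rule (at most two positive roots) combined with $p_{4\infty}(0)=9\Ste>0$, $p_{4\infty}(\sqrt{3})=-18<0$ and $p_{4\infty}(+\infty)=+\infty$, which together place the second positive root beyond $\sqrt{3}$.

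The only genuine obstacle is the uniqueness claim: a priori the quartic could admit two admissible roots in $(0,\sqrt{3})$, which would compromise well-posedness of the approximate solution. I expect this to be the delicate point, but it resolves cleanly precisely because the equation is biquadratic --- the explicit expressions for $w_\pm$ show at a glance that only one positive root falls below $\sqrt{3}$ --- so no estimate beyond the elementary inequalities $\sqrt{2\Ste+1}>1$ and $(1+\Ste)^2>\Ste^2$ is required.
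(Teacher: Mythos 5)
Your proposal is correct and follows essentially the same route as the paper: the same quadratic ansatz, the same reduction to $A_{4\infty}=1-\tfrac{1}{3}\xi_{4\infty}^2$, $B_{4\infty}=\tfrac{1}{3}\xi_{4\infty}^2$ with $\xi_{4\infty}\in(0,\sqrt{3})$, and the same biquadratic equation (\ref{Xi-4Inf}) solved explicitly. The only (harmless) variation is in the uniqueness step, where you exploit the biquadratic structure and the fact that the roots $w_\pm$ of the quadratic in $w=z^2$ have product $9$, whereas the paper combines Descartes' rule of signs with the sign changes $p_{4\infty}(0)>0$, $p_{4\infty}(\sqrt{3})<0$, $p_{4\infty}(+\infty)=+\infty$; both arguments are valid and reach the same conclusion.
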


Consequently, we have the following convergence theorem:

\begin{teo} The solution to problem \emph{(P4)} converges to the solution to problem \emph{(P$4_{\infty}$)} when \emph{Bi}$\rightarrow \infty$.
\end{teo}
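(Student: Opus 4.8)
The plan is to mimic almost verbatim the argument of Theorem~\ref{Teo:ConvP1}, since problem (P4) is structurally identical to (P1) once the relevant polynomial equations are written down. First I would fix $\Ste>0$ and recall that, by the theorem characterising the solution of (P4), the coefficient $\xi_4=\xi_4(\Bi)$ is the unique root of the fourth degree equation (\ref{Xi-4}) in the interval $(0,\sqrt{3})$, with the associated constants $A_4(\Bi)$ and $B_4(\Bi)$ given explicitly by (\ref{A4}) and (\ref{B4}). The goal is then to show that these three $\Bi$-dependent quantities converge, as $\Bi\to\infty$, to their counterparts $\xi_{4\infty}$, $A_{4\infty}$, $B_{4\infty}$ for problem (P4$_\infty$).

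The crucial step is the passage to the limit $\Bi\to\infty$ in the defining equation (\ref{Xi-4}). The only $\Bi$-dependent terms there are $-\tfrac{1}{\Bi}z^3$ and $-\tfrac{3}{\Bi}z$, and both tend to zero uniformly on the compact interval $[0,\sqrt{3}]$, so (\ref{Xi-4}) degenerates exactly into the limiting equation (\ref{Xi-4Inf}). Since the family $\{\xi_4(\Bi)\}$ is trapped in the bounded interval $(0,\sqrt{3})$, it admits convergent subsequences, and by the uniform convergence of the coefficients the limit of any such subsequence must solve (\ref{Xi-4Inf}). The values $p_{4\infty}(0)=9\Ste>0$ and $p_{4\infty}(\sqrt{3})=-18<0$ exclude the endpoints, so every subsequential limit lies in the open interval $(0,\sqrt{3})$, where (\ref{Xi-4Inf}) possesses the unique root $\xi_{4\infty}$ of (\ref{Xi-4Inf-valor2}). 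Hence all subsequential limits coincide, which forces $\lim_{\Bi\to\infty}\xi_4(\Bi)=\xi_{4\infty}$ and therefore $s_4(t)\to s_{4\infty}(t)$ for every $t>0$.

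It then remains to transfer this to the temperature. Substituting $\xi_4(\Bi)\to\xi_{4\infty}$ into (\ref{A4}) and (\ref{B4}) and discarding the $\tfrac{1}{\Bi}$-terms in the denominators, I would obtain $A_4(\Bi)\to \tfrac{2\xi_{4\infty}(3-\xi_{4\infty}^2)}{6\xi_{4\infty}}=1-\tfrac{\xi_{4\infty}^2}{3}=A_{4\infty}$ and $B_4(\Bi)\to \tfrac{2\xi_{4\infty}^3}{6\xi_{4\infty}}=\tfrac{\xi_{4\infty}^2}{3}=B_{4\infty}$, in agreement with (\ref{A4B4Inf}). With $A_4$, $B_4$ and $s_4$ all converging, the pointwise convergence $T_4(x,t)\to T_{4\infty}(x,t)$ on $0<x<s_{4\infty}(t)$ follows immediately from the profile formula (\ref{TempP4}).

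The only delicate point, and it is a mild one, is the rigorous justification that $\xi_4(\Bi)$ genuinely converges rather than merely oscillating as $\Bi\to\infty$; this is exactly what the subsequence-plus-uniqueness argument above settles, and it rests decisively on the uniqueness of the root of (\ref{Xi-4Inf}) in $(0,\sqrt{3})$ established while solving (P4$_\infty$). Everything else reduces to the same elementary limit computations used in Theorem~\ref{Teo:ConvP1}, so the proof can legitimately be abbreviated to ``it is analogous to the proof of Theorem~\ref{Teo:ConvP1}.''
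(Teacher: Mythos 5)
Your proposal is correct and follows essentially the same route as the paper, whose entire proof is the one-line remark that the argument is analogous to Theorem~\ref{Teo:ConvP1}: pass to the limit in the defining polynomial equation, invoke uniqueness of the root of (\ref{Xi-4Inf}) in $(0,\sqrt{3})$ to identify $\lim_{\Bi\to\infty}\xi_4(\Bi)=\xi_{4\infty}$, and then check by direct computation that $A_4\to A_{4\infty}$ and $B_4\to B_{4\infty}$. Your compactness-plus-subsequence justification of why the limit of $\xi_4(\Bi)$ actually exists is a point the paper leaves implicit, so your write-up is if anything slightly more careful than the original.
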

\begin{proof}
It is analogous to the proof given in Theorem \ref{Teo:ConvP1}. 
\end{proof}

\section{Comparisons with the exact solution}

In this section comparisons between the known exact solution of the Stefan problem (P) and the approximate solutions associated to each problem (Pi), $i=1,2,3,4$ are done. Furthermore, in view of the convergence results presented in Section 4, we compare the exact solution to problem (P) where a convective boundary condition at the fixed face is imposed with the exact solution of problem (P$_\infty$) in which a Dirichlet boundary condition is taken into account. The same is done with each approximate solution obtained from problem (Pi) and (Pi$_\infty$), $i=1,2,3,4$.

\subsection{Comparisons between the free boundaries}

To compare the free boundaries obtained in each problem,  we compute the coefficient that characterizes the free boundary in the exact problem (P) and the approximate problems (Pi) $i=1,2,3,4$. The exact value of $\xi$ and the different approaches $\xi_i$ $i=1,2,3,4$  are obtained by solving the equations obtained in Section 2 and 3.

In the cases of  problems (P$_\infty$) and (Pi$_\infty$) we compute: the exact value $\xi_{\infty}$,  by solving a nonlinear equation, and the different approaches $\xi_{i\infty}$, $i=1,2,3,4$  by closed expressions that were obtained in Section 4.

In order to make visible the accuracy of each approximate method; fixing some Stefan and Biot numbers we calculate for $i=1,2,3,4$: the relative error of the free boundary associated to problem (Pi) given by $\text{E}_{\text{rel}}(s_i)=\vert \tfrac{\xi_i-\xi}{\xi}\vert$ and the relative error associated to problem (Pi$_\infty$) defined by \mbox{$ \text{E}_{\text{rel}}(s_{i\infty})=\vert \tfrac{\xi_{i\infty}-\xi_{\infty}}{\xi_{\infty}}\vert $}.

Varying Ste and Bi numbers, we obtain different visualizations for the behaviour of the relative error committed in each method. Since similar plots were obtained for a great number of cases, we present here the most significant ones. For Ste$=10^{-3}, 1,10$ we compute $\text{E}_{\text{rel}}(s_{i\infty})$ for $i=1,2,3,4$. Also, for those  Ste numbers and making  Bi vary between 0  and  a suitable value in which the convergence can be appreciated, we compute $\text{E}_{\text{rel}}(s_i)$, $i=1,2,3,4$. Figures 1, 2 and 3 shows the behaviour of the relative errors that correspond to the problem (Pi) and (Pi$_\infty$) for $i=1,2,3$. The plot that corresponds to (P4) and (P4$_\infty$) is presented separately so that the error is better noticed (see Figure 4).

 \begin{figure}[h!]
 \centering
 \includegraphics[scale=0.3]{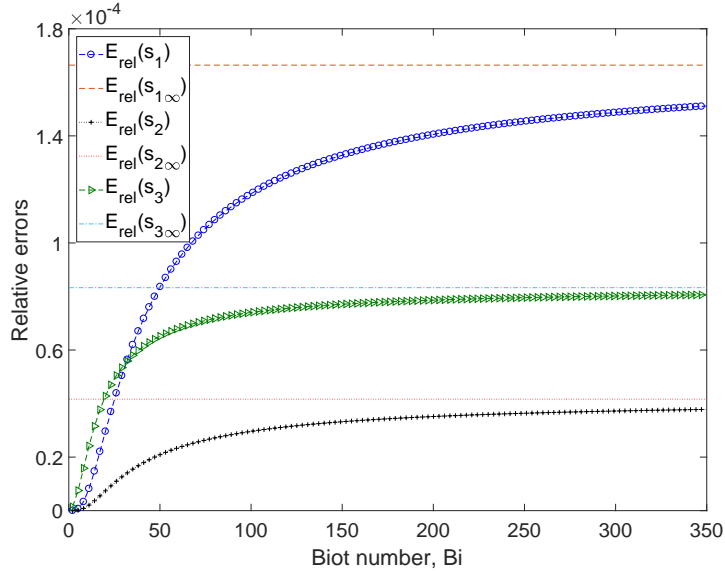}
\caption{Relative errors of $s_1$, $s_2$ and $s_3$ against $\Bi$ for Ste$=10^{-3}$.}

 \end{figure}

\begin{figure}[h!]

 \centering
 \includegraphics[scale=0.3]{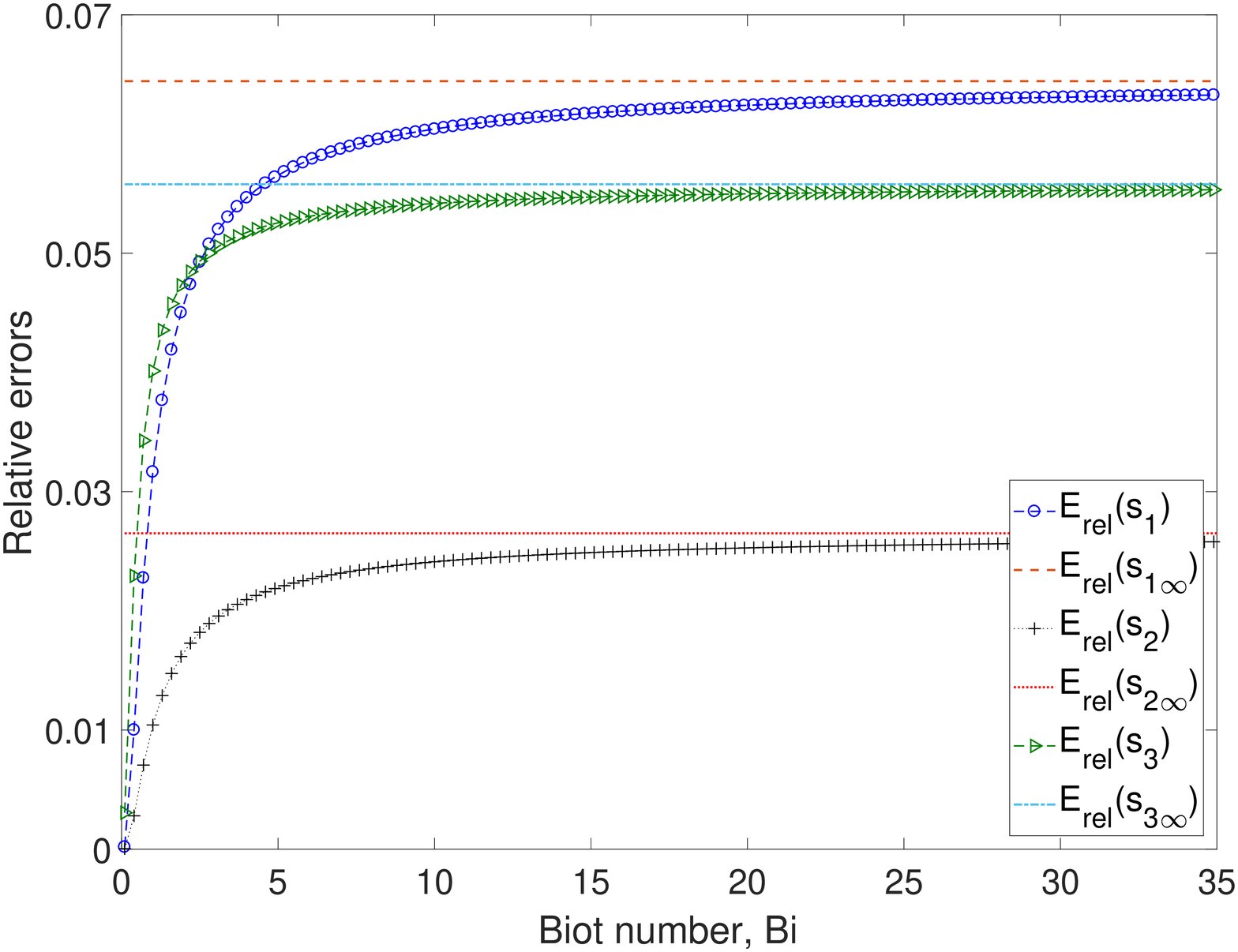}
 \caption{ Relative errors of $s_1$, $s_2$ and $s_3$ against $\Bi$ for Ste$=1$.}

\end{figure}

\begin{figure}[h!]

 \centering 
 \includegraphics[scale=0.3]{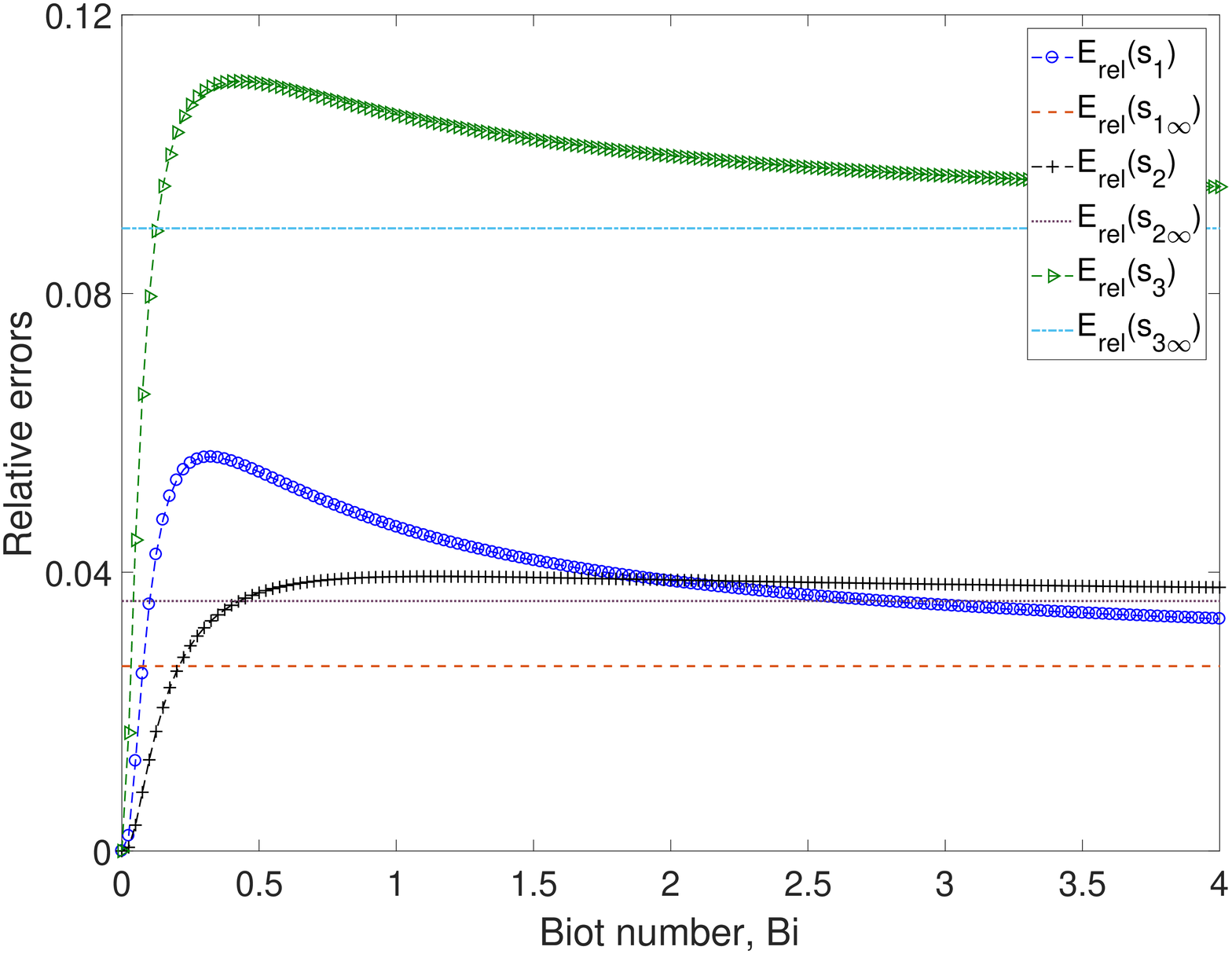}
 \caption{ Relative errors of $s_1$, $s_2$ and $s_3$ against $\Bi$ for Ste$=10$.}

\end{figure}

\begin{figure}[h!]

 \centering
 \includegraphics[scale=0.3]{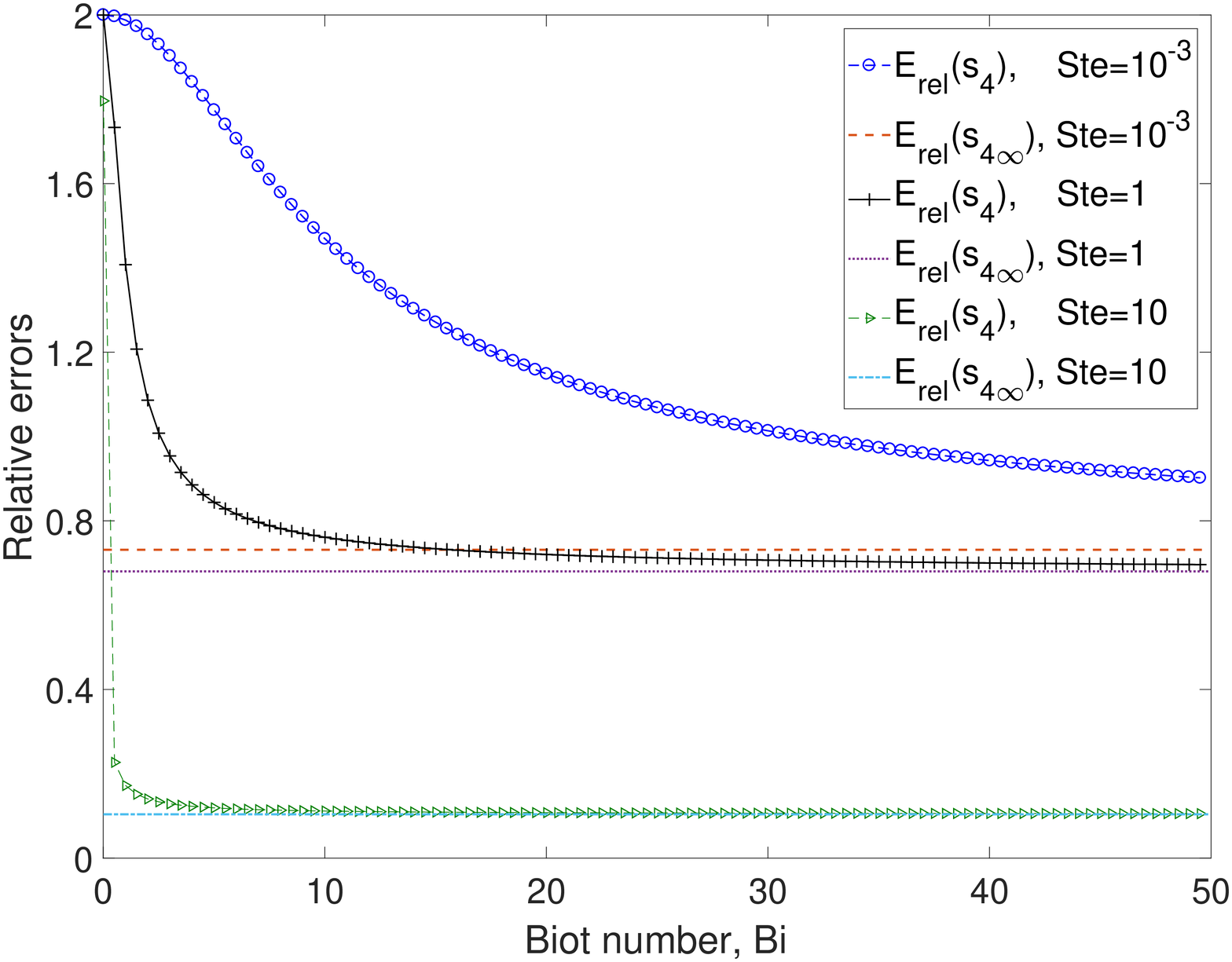}
 \caption{ Relative errors of $s_4$ against $\Bi$ for  Ste$=10^{-3}, 1, 10$.}

\end{figure}

From the numerical analysis we observe that for Ste $<1$ and varying Bi number, (P2) and (P3) gives the best approximation of the free boundary.

In addition, (P4) does not constitute a good approximation.
It is worth to mention that taking a Stefan number up to about 1 covers most of phase change materials.

In the cases for Ste $>1$ we see that the best approximations are given by (P1) and (P2).


\subsection{Comparisons between the temperature profiles}

Considering the 1D solidification process in the region $x>0$, we fix the physical parameters for ice given by: the thermal conductivity $k=2.219$ W/(m$^{\circ}$C), the specific heat $c=2097.6$ J/(kg K), the thermal diffusivity $\alpha=1.15\times 10^{-6}$ m$^2$/s and  the latent heat of fusion $\lambda=3.33\times 10^5$ J/kg.

In order to show the behaviour of the  temperature  of problems (P), (Pi), i$=1,2,3,4$ with respect to the space variable $x$, we fix the time at $t=10$ s, the neighbourhood
 temperature at the fixed face $\Theta_{\infty}=5~^{\circ}$C and the coefficient that characterizes the heat transfer at the fixed face \mbox{$h=1.65\times 10^5$ W$\sqrt{\text{s}}$/(m$^{\circ}$C)}. Notice that this choices lead us to define the Stefan number and Biot number as: $\Ste=0.0314$ and Bi$=80$.

In Figure 5 we compare the four approximate temperatures given by $T_i$ ($i=1,2,3,4$) with the exact solution $T$ given by (\ref{TempExacta}). In addition we plot the temperature $T_{\infty}$ with the purpose of showing the convergence when Bi is large.
\begin{figure}[h]
 \centering
 \includegraphics[scale=0.3]{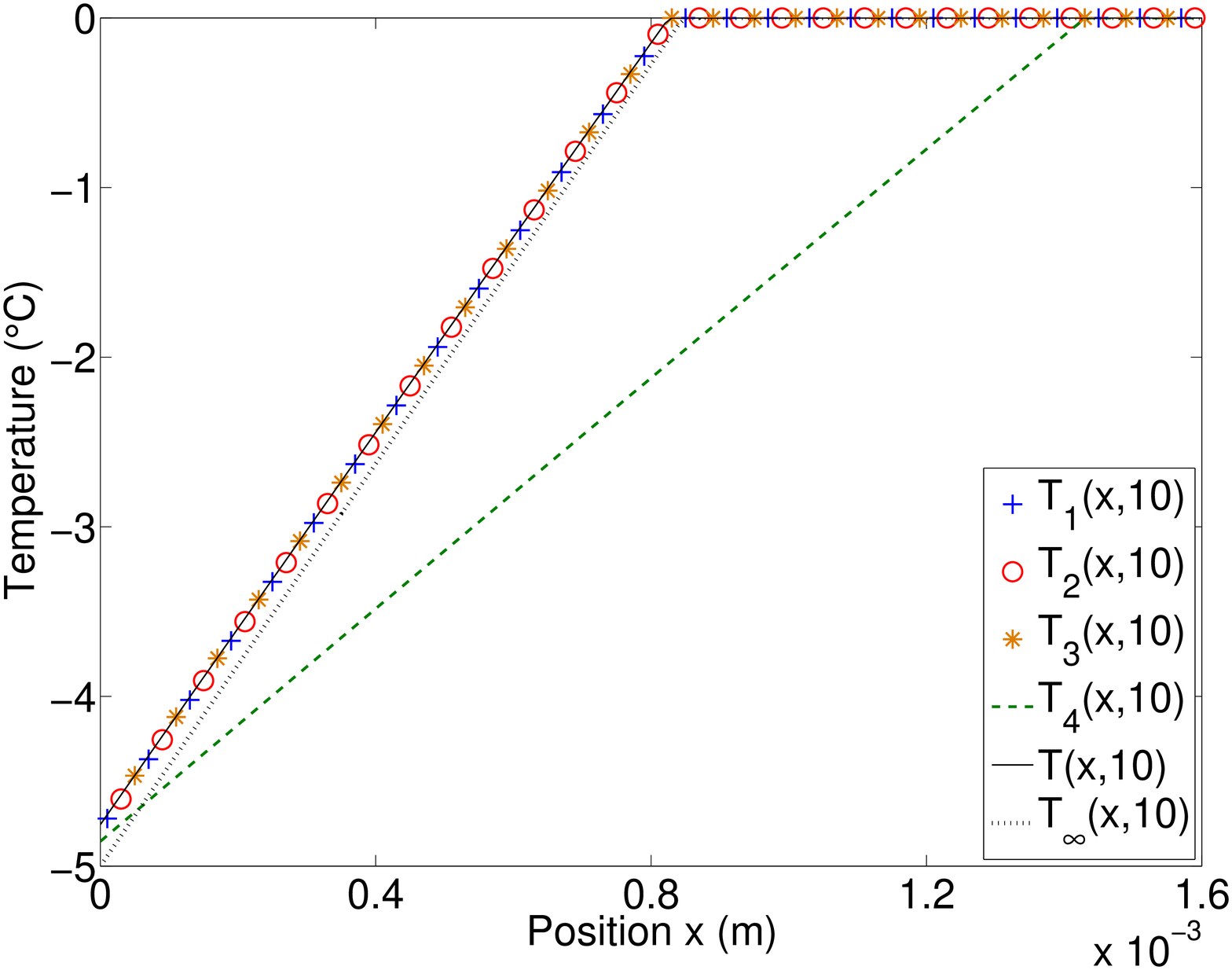}
 \caption{ Plot of the profile temperatures at $t$=10 s for Ste=0.0314 and Bi=80.}

\end{figure}

Before giving results, notice that in Figure 5 we can  appreciate that neither the plot of the exact temperature corresponds to an error function  nor the plot of the approximate ones corresponds to quadratic functions in space. The main reason of this fact is because the range of $x$ (position) is not large enough to perceive this type of curves.

From Fig. 5, we can  easily observe that (P4) gives the worst approximation for the temperature; whereas the approaches obtained by problems (P1), (P2) and (P3) are difficult to distinguish from each other and more importantly from the exact solution. 
As a consequence to make the difference between the curves clearer we present the following tables of absolute errors defined by \mbox{$E_{\text{abs}}(T_i)=\lvert T(x,10)-T_i(x,10)\rvert$}, for $i=1,2,3,4$.
\vspace{0.1cm}

\begin{center}
\begin{footnotesize}
Table 1: Absolute errors of temperature approximations at $t=10$s.
\end{footnotesize}
\begin{tabular}{c|c|c|c|c}
\hline
        position $x$    & $E_{\text{abs}}(T_1)$    & $E_{\text{abs}}(T_2)$& $E_{\text{abs}}(T_3)$& $E_{\text{abs}}(T_4)$\\
        \hline
       0    	 &	0.023661  &   0.024332  &   0.000581      & 0.0993 \\
       0.0001    &  0.018960   &  0.021135  &   0.002256      & 0.3339  \\
       0.0002    & 	0.016015  &   0.018719  &   0.004368      & 0.5690   \\
       0.0003    & 	0.014576  &   0.016830  &   0.006667      & 0.8042\\
       0.0004    & 	0.014391  &   0.015219  &   0.008902      & 1.0395\\
       0.0005    & 	0.015212  &   0.013637  &   0.010823      & 1.2744\\
       0.0006    & 	0.016789  &   0.011835  &   0.012183      & 1.5088\\
       0.0007    & 	0.018877  &   0.009567  &   0.012735	  & 1.7424\\
       0.0008    & 	0.021231  &   0.006587  &   0.012234      & 1.9749\\
       0.0009    &        0   &         0   &         0       & 1.7843\\
        0.001    &        0   &         0   &         0       & 1.4467\\
 \hline
\end{tabular}

\vspace{0.1cm}
\begin{center}
\begin{footnotesize}
Table 2: Absolute errors of temperature approximations at $t=10$s.
\end{footnotesize}
\begin{tabular}{c|c|c|c|c}
\hline
       position $x $     & $E_{\text{abs}}(T_1)$    & $E_{\text{abs}}(T_2)$& $E_{\text{abs}}(T_3)$& $E_{\text{abs}}(T_4)$\\
        \hline
     0.000820  &   0.021712  &  0.0058853  &   0.011986   &    2.0213\\
     0.000821  &   0.021736  &  0.0058492  &   0.011972   &    2.0236\\
     0.000822  &   0.021760  &  0.0058129  &   0.011958   &    2.0259\\
     0.000823  &   0.021784  &  0.0057766  &   0.011944   &    2.0283\\
     0.000824  &   0.021809  &  0.0057401  &   0.011930   &    2.0306\\
     0.000825  &   0.021833  &  0.0057035  &   0.011916   &    2.0329\\
     0.000826  &   0.021187  &  0.0049971  &   0.011231   &    2.0345\\
     0.000827  &   0.015511  &          0  &   0.005516   &    2.0312\\
     0.000828  &   0.009834  &          0  &          0   &    2.0278\\
     0.000829  &   0.004158  &          0  &          0   &    2.0244\\
     0.000830  &          0  &          0  &          0   &    2.0210\\
     \hline
\end{tabular}
\end{center}

\noindent 
\end{center}

In Table 1 we can see that the accuracy of the temperatures corresponding to problems (P1), (P2) and (P3) is lower than 0.025.

If we analyse the absolute error between $x=0.00082$ and $x=0.00083$ (Table 2), we obtain that the problem (P2) gives the most suitable approach to the free boundary, as we have already seen in the previous subsection.

\section{Conclusion}

In this paper, approximate analytical solutions by using some variations of the classical heat balance integral method are obtained  for a one-dimensional one-phase Stefan problem (P)  with a convective (Robin) \mbox{boundary} condition at the fixed face. 

This work provides information about which approximate method to choose for approaching the solution to the solidification problem (P), according to the data of the problem (Ste and Bi number). 

The approximate techniques  for tracking the free boundary used throughout this work corresponds to the heat balance integral method (P1); an alternative of the heat balance integral method (P2); the refined integral method (P3) and an alternative of the refined integral method (P4). The alternatives forms proposed here are inspired in \cite{Wo2001} and consist in re-working or not the Stefan condition (\ref{CondStefan}).

In most of cases, due to the nonlinearity of the Stefan problems there is no analytical solution. In case of problem (P) the exact solution is available recently in the literature \cite{Ta2017}, so that the main advantage is that we can compare the different approaches with the exact solution and consequently estimate the accuracy of the different approximate methods.

Therefore it can be said that in general the optimal approximate technique for solving (P) is given by the alternative form of the heat balance integral method defined by (P2), in which the Stefan condition is not removed and remains equal to the exact problem.

\section*{Acknowledgements}

The present work has been partially sponsored by the Project PIP No 0275 from CONICET-UA,
Rosario, Argentina, and ANPCyT PICTO Austral No 0090.



\end{document}